\newcommand{\CC}{\ensuremath{\mathbb{C}}}
\newcommand{\RR}{\ensuremath{\mathbb{R}}}
\newcommand{\ZZ}{\ensuremath{\mathbb{Z}}}
\renewcommand{\SS}{\ensuremath{\mathbb{S}}}
\newcommand{\calT}{\mathcal{T}}
\newcommand{\kk}{\Bbbk}
\newcommand{\scrY}{\mathscr{Y}}
\newcommand{\scrV}{\mathscr{V}}
\newcommand{\scrP}{\mathscr{P}}
\newcommand{\scrQ}{\mathscr{Q}}
\newcommand{\coker}{\mathop{\mathrm{coker}}\nolimits}
\newcommand{\Diop}{\mathop{\mathsf{Dioperads}}\nolimits}
\newcommand{\PDiop}{\mathop{\mathsf{PDioperads}}\nolimits}
\newcommand{\Codiop}{\mathop{\mathsf{Codioperads}}\nolimits}
\newcommand{\PCodiop}{\mathop{\mathsf{PCodioperads}}\nolimits}
\newcommand{\antish}{\ensuremath{\mbox{!`}}}
\tikzset{->-/.style={decoration={
			markings,
			mark=at position #1 with {\arrow{>}}},postaction={decorate}}}
\tikzset{-w-/.style={decoration={
			markings,
			mark=at position #1 with {\arrow{Stealth[fill=white,scale=1.4]}}},postaction={decorate}}}
\tikzset{->-/.default=0.65}
\tikzset{-w-/.default=0.65}
\tikzstyle{bullet}=[circle,fill=black,inner sep=0.5mm]
\tikzstyle{circ}=[circle,draw=black,fill=white,inner sep=0.5mm]
\tikzstyle{vertex}=[circle,draw=black,thick,inner sep=0.5mm]
\tikzstyle{dot}=[draw,circle,fill=black,minimum size=0.5mm,inner sep = 0mm, outer sep = 0mm]
\tikzset{darrow/.style={double distance = 4pt,>={Implies},->},
	darrowthin/.style={double equal sign distance,>={Implies},->},
	tarrow/.style={-,preaction={draw,darrow}},
	qarrow/.style={preaction={draw,darrow,shorten >=0pt},shorten >=1pt,-,double,double
		distance=0.2pt}}
\newcommand{\tikzfig}[1]{\begin{tikzpicture}[auto,baseline={([yshift=-.5ex]current bounding box.center)}]#1\end{tikzpicture}}
\newtheorem{theorem}{Theorem}
\newtheorem{lemma}[theorem]{Lemma}
\newtheorem{proposition}[theorem]{Proposition}
\newtheorem{corollary}[theorem]{Corollary}
\newtheorem*{theorem*}{Theorem}
\theoremstyle{definition}
\newtheorem{definition}{Definition}
\title{Koszulity of a certain dioperad}
\author{Alex Takeda}
\begin{document}
\newcommand{\Addresses}{{%
  \bigskip
  \footnotesize
  \noindent A.~Takeda, \textsc{Uppsala University, Department of Mathematics, Box 480, 751 05 Uppsala, Sweden}\par\nopagebreak
  \noindent\textit{E-mail address}: \texttt{alex.takeda@math.uu.se}
 }}

\begin{abstract}
    We establish that the dioperad $\scrY^{(n)}$, encoding bialgebras with a product of degree zero, a coproduct of degree $(1-n)$ and a rank three cyclic tensor, which satisfy a deformed version of the balanced infinitesimal bialgebra condition, is Koszul. This result is established by studying specific subcomplexes of the assocoipahedra of Poirier and Tradler. These subcomplexes are related to a certain type of meromorphic quadratic differential on $\mathbb{CP}^1$, which we call cloven Strebel differentials. Using that geometric interpretation, we can control the topology of the relevant subcomplexes and deduce the vanishing of higher cohomology of the corresponding dioperadic bar complexes.
    \\
    \\
    {\it Mathematics Subject Classification} (2020). 18M85, 30F30; 57K20, 53C12. \\
    \emph{Keywords.} Dioperads, quadratic differentials, Koszul duality.
\end{abstract}

\maketitle

\section{Introduction}
\noindent A \emph{properad} is an object that encodes a type of algebra whose operations have multiple inputs and outputs; one allows composition along any connected directed acyclic graph, and imposes relations that involve compositions given by arbitrary linear combinations of such graphs, of any genus. In a \emph{dioperad}, one only records the data of compositions along trees, and is constrained to impose relations that only involve compositions along trees. A \emph{planar dioperad} is an even more restrictive notion, where one requires each relation to be given by linear combinations of trees that are embedded in the plane in a coherent manner. There are universal enveloping functors
\[ \PDiop \xrightarrow{\Phi} \Diop \xrightarrow{F} \mathsf{Properads}\]
that, starting from a planar dioperad, freely adjoin compositions along any connected graphs, quotienting out by planar genus zero relations. 

Here we are interested in (planar) dioperads that appear in the study of Poincar\'e duality and string topology; one such example is the dioperad $\mathscr{V}^{(n)}$ of \cite{poirier2019koszuality}, which encodes an associative algebra with a compatible copairing of homological degree $-n$. 
The dioperad $\scrY^{(n)}$ is a variant of $\mathscr{V}^{(n)}$ and was defined in \cite{emprin2025properadic}, with the purpose of encoding the data of an orientation on a space $X$ into the chains on the based loop space. This dioperad is defined by planar relations, in other words, we have
\[ \mathscr{Y}^{(n)} = \Phi(\mathscr{Y}^{(n)}_{pl})\]
for a certain planar dioperad $\mathscr{Y}^{(n)}_{pl}$, so algebras over $\mathscr{Y}^{(n)}$ are defined by planar relations: such an algebra $A$ has
\begin{itemize}
    \item A product $\mu \colon A \otimes A \to A$ of degree zero,
    \item A coproduct $\alpha \colon A \to A \otimes A$ of homological degree $(1-n)$,
    \item A cyclically symmetric rank three tensor $\beta \in A \otimes A \otimes A$ of homological degree $(2-2n)$,
\end{itemize} 
satisfying the following relations.

\small\begin{align*}
    &\tikzfig{
        \node [vertex] (bot) at (0,-0.2) {$\mu$};
        \node [vertex] (top) at (-0.5,0.5) {$\mu$};
        \draw [->-] (-1,1) to (top);
        \draw [->-] (0,1) to (top);
        \draw [->-] (top) to (bot);
        \draw [->-] (1,1) to (bot);
        \draw [->-] (bot) to (0,-1);
    } ~ - ~ \tikzfig{
        \node [vertex] (bot) at (0,-0.2) {$\mu$};
        \node [vertex] (top) at (0.5,0.5) {$\mu$};
        \draw [->-] (-1,1) to (bot);
        \draw [->-] (0,1) to (top);
        \draw [->-] (top) to (bot);
        \draw [->-] (1,1) to (top);
        \draw [->-] (bot) to (0,-1);
    } ~ \quad ; \quad \tikzfig{
        \node [vertex] (top) at (0,0.5) {$\mu$};
        \node [vertex] (bot) at (0,-0.5) {$\alpha$};
        \draw [->-] (-1,1) to (top);
        \draw [->-] (top) to (bot);
        \draw [->-] (1,1) to (top);
        \draw [->-] (bot) to (-1,-1);
        \draw [->-] (bot) to (1,-1);
    } ~ - ~ \tikzfig{
        \node [vertex] (top) at (0.5,0) {$\alpha$};
        \node [vertex] (bot) at (-0.5,0) {$\mu$};
        \draw [->-] (-1,1) to (bot);
        \draw [->-] (top) to (bot);
        \draw [->-] (1,1) to (top);
        \draw [->-] (bot) to (-1,-1);
        \draw [->-] (top) to (1,-1);
    } ~ - ~ \tikzfig{
        \node [vertex] (top) at (-0.5,0) {$\alpha$};
        \node [vertex] (bot) at (0.5,0) {$\mu$};
        \draw [->-] (-1,1) to (top);
        \draw [->-] (top) to (bot);
        \draw [->-] (1,1) to (bot);
        \draw [->-] (top) to (-1,-1);
        \draw [->-] (bot) to (1,-1);
    } ~;\\
    &\tikzfig{
        \node [vertex] (left) at (-0.5,0) {$\alpha$};
        \node [vertex] (right) at (0.5,0) {$\mu$};
        \draw [->-] (0,1) to (left);
        \draw [->-] (0,-1) to (right);
        \draw [->-] (left) to (-1.2,0);
        \draw [->-] (left) to (right);
        \draw [->-] (right) to (1.2,0);
    } ~ - ~ \tikzfig{
        \node [vertex] (left) at (-0.5,0) {$\mu$};
        \node [vertex] (right) at (0.5,0) {$\alpha$};
        \draw [->-] (0,1) to (right);
        \draw [->-] (0,-1) to (left);
        \draw [->-] (left) to (-1.2,0);
        \draw [->-] (right) to (left);
        \draw [->-] (right) to (1.2,0);
    } ~ + (-1)^n \left(\tikzfig{
        \node [vertex] (left) at (-0.5,0) {$\alpha$};
        \node [vertex] (right) at (0.5,0) {$\mu$};
        \draw [->-] (0,1) to (right);
        \draw [->-] (0,-1) to (left);
        \draw [->-] (left) to (-1.2,0);
        \draw [->-] (left) to (right);
        \draw [->-] (right) to (1.2,0);
    } ~ - ~ \tikzfig{
        \node [vertex] (left) at (-0.5,0) {$\mu$};
        \node [vertex] (right) at (0.5,0) {$\alpha$};
        \draw [->-] (0,1) to (left);
        \draw [->-] (0,-1) to (right);
        \draw [->-] (left) to (-1.2,0);
        \draw [->-] (right) to (left);
        \draw [->-] (right) to (1.2,0);
    }\right) ~;
\end{align*}
\begin{align*}
    &\tikzfig{
        \node [vertex] (top) at (0,0.4) {$\mu$};
        \node [vertex] (bot) at (0,-0.6) {$\beta$};
        \draw [->-] (bot) to (top);
        \draw [->-] (bot) to (-1.2,-1);
        \draw [->-] (bot) to (1.2,-1);
        \draw [->-] (-1.2,0.4) to (top);
        \draw [->-] (top) to (0,1);
    } ~ - \qquad \tikzfig{
        \node [vertex] (top) at (0,0.1) {$\beta$};
        \node [vertex] (bot) at (-0.7,-0.5) {$\mu$};
        \draw [->-] (top) to (0,1);
        \draw [->-] (bot) to (-1.2,-1);
        \draw [->-] (top) to (1.2,-1);
        \draw [->-] (-1.2,0.4) to (bot);
        \draw [->-] (top) to (bot);
    } ~ + ~ \tikzfig{
        \node [vertex] (top) at (0,0.4) {$\alpha$};
        \node [vertex] (bot) at (0,-0.6) {$\alpha$};
        \draw [->-] (top) to (bot);
        \draw [->-] (bot) to (-1.2,-1);
        \draw [->-] (bot) to (1.2,-1);
        \draw [->-] (-1.2,0.4) to (top);
        \draw [->-] (top) to (0,1);
    } ~ - ~ \tikzfig{
        \node [vertex] (top) at (0,0.1) {$\alpha$};
        \node [vertex] (bot) at (-0.7,-0.5) {$\alpha$};
        \draw [->-] (top) to (0,1);
        \draw [->-] (bot) to (-1.2,-1);
        \draw [->-] (top) to (1.2,-1);
        \draw [->-] (-1.2,0.4) to (bot);
        \draw [->-] (bot) to (top);
    } ~; \\ \\
    &\tikzfig{
        \node [vertex] (top) at (0,0.5) {$\alpha$};
        \node [vertex] (bot) at (0,-0.5) {$\beta$};
        \draw [->-] (bot) to (top);
        \draw [->-] (top) to (-1,1);
        \draw [->-] (top) to (1,1);
        \draw [->-] (bot) to (-1,-1);
        \draw [->-] (bot) to (1,-1);
    } ~ - ~ \tikzfig{
        \node [vertex] (left) at (-0.5,0) {$\alpha$};
        \node [vertex] (right) at (0.5,0) {$\beta$};
        \draw [->-] (right) to (left);
        \draw [->-] (left) to (-1,1);
        \draw [->-] (left) to (-1,-1);
        \draw [->-] (right) to (1,-1);
        \draw [->-] (right) to (1,1);
    } ~ + (-1)^n \left( ~ \tikzfig{
        \node [vertex] (top) at (0,0.5) {$\beta$};
        \node [vertex] (bot) at (0,-0.5) {$\alpha$};
        \draw [->-] (top) to (bot);
        \draw [->-] (top) to (-1,1);
        \draw [->-] (top) to (1,1);
        \draw [->-] (bot) to (-1,-1);
        \draw [->-] (bot) to (1,-1);
    } ~ - ~ \tikzfig{
        \node [vertex] (left) at (-0.5,0) {$\beta$};
        \node [vertex] (right) at (0.5,0) {$\alpha$};
        \draw [->-] (right) to (left);
        \draw [->-] (left) to (-1,1);
        \draw [->-] (left) to (-1,-1);
        \draw [->-] (right) to (1,-1);
        \draw [->-] (right) to (1,1);
    } ~ \right)
\end{align*}\normalsize
Note that all of these relations, and the cyclicity condition on $\beta$, can be given by linear combinations of diagrams embedded in a disc with a consistent labeling of inputs and outputs, using only the cyclic action, which comes from the rotational symmetry of the disc; this is why this dioperad is planar, as we will explain in \cref{sec:planar}.
In this note the following theorem is established:
\begin{theorem}\label{thm:main}
    The dioperad $\mathscr{Y}^{(n)}$ is Koszul.
\end{theorem}
\noindent We will prove this result by giving a geometric interpretation of the bar complexes associated to (the Koszul dual of) the planar dioperad $\scrY^{(n)}_{pl}$. This interpretation is in the same spirit as the description of the ``PROP of open-closed marked surfaces'' of \cite{kontsevich2025pre} using moduli spaces of meromorphic Strebel differentials. In \cref{sec:planar} we will recall this theory, in the specific case of differentials on $\mathbb{CP}^1$ with a higher-order pole at infinity, or equivalently, polynomial quadratic differentials on $\CC$. The moduli spaces of such objects have a natural non-compact cell complex structure that is dual to the cell complex structure of the \emph{assocoipahedra} of \cite{poirier2018combinatorics}, and gives an alternative proof of their compatibility, established in that reference; by the results of \cite{poirier2019koszuality} this implies Koszulity of the dioperad $\mathscr{V}^{(n)}$.

In the $\mathscr{Y}^{(n)}$ case, one needs to remove a certain subcomplex of \emph{cloven Strebel differentials}; these are quadratic differentials that split the complex plane into regions in a specific way. Using geometric arguments, we can control the topology of this subcomplex, concluding that it is always homologous to a bouquet of $(k-2)$-dimensional spheres, where $k$ is the number of outputs we are looking at. Together with a comparison between the bar complexes associated to $\mathscr{V}^{(n)}$ and $\mathscr{Y}^{(n)}$, vanishing of this homology outside of that specific degree implies vanishing of higher cohomology of the relevant bar complex, establishing our main theorem.

\subsection{Relation to other results}
We would like to discuss the relation between the result of this note and known related statements about properadic and dioperadic Koszulity. In general, Koszulity of a quadratic dioperad $\scrQ$ does not imply Koszulity of its enveloping properad $F\scrQ$, and even when both of these hold, the corresponding cofibrant resolutions are not directly obtained from one another \cite{merkulov2009deformation}. However, there is a class of examples for which these notions of Koszulity are related. We will use terminology from the recent preprint \cite{khoroshkin2026properad}: one says that a quadratic dioperad $\scrP$ is \emph{genus-contractible} when the quadratic dual of $F\scrP$ is in fact a codioperad, that is, its decomposition map does not generate any higher genus terms.

If $\scrQ$ is genus-contractible and moreover $F\mathscr{Q}$ is Koszul \textit{as a properad}, then $\scrQ$ is a Koszul dioperad, and $F\mathscr{Q}$ has a cofibrant resolution of a particularly nice form
\[ \Omega^\diamond_\mathrm{prop}(\mathscr{Q}^{\antish}) \to F\mathscr{Q}, \]
where the functor $\Omega^\diamond_\mathrm{prop}(-)$ gives the quasi-free coproperad with zero higher-genus decomposition maps \cite[Propositions~48,49]{merkulov2009deformation}. Merkulov and Vallette refer to such properads as \emph{Koszul-contractible}. In these cases, the notion of ``$F\mathscr{Q}$-algebras up to homotopy'' can be parametrized purely by genus zero data. 

Let us point out three other cases, closely related to $\scrY^{(n)}$, for which Koszulity has been studied. The first one is the already-mentioned dioperad $\scrV^{(n)}$ of Poirier and Tradler, following earlier work of Tradler and Zeinalian \cite{tradler2007infinity}; algebras over this dioperad are pre-Calabi--Yau algebras, in the formulation of \cite{kontsevich2025pre}.
\begin{theorem*}[\cite{poirier2019koszuality,leray2025precalabiyau}]
    The dioperad $\scrV^{(n)}$ is Koszul but not genus-contractible.
\end{theorem*}

\noindent Another one is the dioperad of balanced infinitesimal bialgebras $\mathrm{BIB}^\lambda$ studied by Quesney in \cite{quesney2024balanced}; the algebras that this dioperad encodes are associative analogues of Lie bialgebras, and have a coproduct of degree $\lambda$. It was proved in that paper that this dioperad is genus-contractible, but using deformation theory techniques, Merkulov showed in \cite{merkulov2025complex} the following result.
\begin{theorem*}
    The properad $F(\mathrm{BIB}^\lambda)$ is not Koszul.
\end{theorem*}

\noindent Finally, the third dioperad that we would like to mention is the dioperad $\mathrm{DPois}$ encoding the double Poisson algebras of van den Bergh; this dioperad was studied by Leray in \cite{leray2019protoperadsI,leray2020protoperadsII}, who together with Vallette in \cite{leray2025precalabiyau} showed the following result.
\begin{theorem*}
    The dioperad $\mathrm{DPois}$ is Koszul-contractible.
\end{theorem*}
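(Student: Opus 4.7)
The plan is to deduce \cref{thm:main} by combining the contractibility of the dioperad $\scrY^{(n)}$ established in \cite{emprin2025properadic} with a direct proof of Koszulity at the planar level. By the machinery recalled in \cref{sec:planar}, Koszulity of the planar dioperad $\scrY^{(n)}_{pl}$ propagates through $\Phi$ to Koszulity of $\scrY^{(n)}$, and together with contractibility this yields Koszul contractibility of the properad $\scrY^{(n)}_{prop} = F\scrY^{(n)}$. So for each number $k$ of outputs, it is enough to show that the arity-$k$ piece of the (Koszul) bar complex of $\scrY^{(n)}_{pl}$ has cohomology concentrated in a single weight-diagonal degree.

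The first technical step is to set up a geometric model for this bar complex, in the spirit of \cite{kontsevich2025pre}. I would identify the arity-$k$ summand of the bar complex of the planar dioperad $\scrV^{(n)}_{pl}$ with the cellular chain complex of a moduli space $\calM_k$ of polynomial quadratic differentials on $\CC$ (equivalently, meromorphic quadratic differentials on $\mathbb{CP}^1$ with a single pole of order $k+2$ at infinity), stratified by the combinatorial type of the critical graph of the horizontal foliation. Top cells correspond to generic trivalent Strebel graphs, and the cellular boundary implements planar dioperadic composition. This cell decomposition is Poincaré dual to the assocoipahedral structure of \cite{poirier2018combinatorics} and recovers the Koszulity of $\scrV^{(n)}_{pl}$ proved in \cite{poirier2019koszuality}.

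The second step is the passage from $\scrV^{(n)}$ to $\scrY^{(n)}$, in which the extra relations involving $\beta$ correspond geometrically to excising the sub-cell complex $\calC_k \subset \calM_k$ of \emph{cloven} Strebel differentials, those whose horizontal foliation partitions $\CC$ into three regions in a coherent cyclic fashion. The arity-$k$ bar complex of $\scrY^{(n)}_{pl}$ is then modeled by the relative cellular chain complex of the pair $(\calM_k, \calC_k)$, and the long exact sequence of the pair reduces the desired vanishing to the topological statement that $\calC_k$ is homotopy equivalent to a wedge of $(k-2)$-spheres, so that the connecting maps absorb exactly the contributions from top cells of $\calM_k$ and leave cohomology only in the expected weight.

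The main obstacle, and the geometric core of the proof, is this topological claim. My approach would be to stratify $\calC_k$ by the combinatorial data describing the three-way partition, namely cyclic labels of the three sectors together with the positions and multiplicities of the critical trajectories that separate them, and then to construct either a deformation retraction of $\calC_k$ onto a $(k-2)$-dimensional CW subcomplex whose homotopy type is transparent, or a CW filtration amenable to Mayer--Vietoris. The parameter spaces of generic configurations are contractible affine pieces, and non-trivial topology is created only at boundary strata where separating trajectories collide or cross zeros of the differential; tracking these collisions combinatorially and showing that they assemble into exactly a wedge of $(k-2)$-spheres is the delicate part. Once this is established, the exact sequence of the pair $(\calM_k, \calC_k)$ yields vanishing of higher cohomology of the $\scrY^{(n)}_{pl}$ bar complex, and combining with contractibility of $\scrY^{(n)}$ proves \cref{thm:main}.
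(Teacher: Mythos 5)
Your proposal does not prove the statement it was asked to prove. The statement in question is Leray's theorem that the dioperad $\mathrm{DPois}$ of double Poisson algebras is Koszul contractible; in this paper that result is only \emph{cited} (from \cite{leray2019protoperadsI,leray2020protoperadsII}) in the section comparing $\scrY^{(n)}$ to related dioperads, and no proof of it is given or attempted here. What you have written instead is a sketch of the proof of \cref{thm:main}, the Koszul contractibility of $\scrY^{(n)}_{prop}$ --- a different object and a different theorem. As an outline of the paper's actual argument for \cref{thm:main} your sketch is broadly faithful (bar complex of $\scrV^{(n)}_{pl}$ modeled on moduli of polynomial quadratic differentials dual to the assocoipahedron, excision of the cloven locus, identification of its homology with a bouquet of $(k-2)$-spheres, long exact sequence), but that is not what was requested.

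Moreover, the argument does not transfer to $\mathrm{DPois}$. The relation between $\mathrm{DPois}$ and the chain $(\mathrm{BIB}^{1-n})^! \hookrightarrow (\scrY^{(n)})^! \hookrightarrow (\scrV^{(n)})^!$ only exists for $n=2$, the relevant maps do not respect the weight grading and do not come from maps of quadratic data, and $\mathrm{DPois}$ is not presented as (the image under $\Phi$ of) a planar dioperad in this paper, so \cref{prop:KoszulDiop} and the cellular model via regularized Strebel differentials are not available for it. Koszulity and contractibility of $\mathrm{DPois}$ are established by Leray with entirely different techniques (via protoperads), and any proof of the stated theorem would have to engage with that machinery or produce a new argument specific to double Poisson algebras; neither appears in your proposal.
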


\noindent The relationship between $\scrY^{(n)}$ and these three other dioperads was established in \cite{leray2025precalabiyau,quesney2024balanced,emprin2025properadic}. Their Koszul dual dioperads are related by inclusions
\[ \mathrm{DPois}^! \overset{n=2}\hookrightarrow (\mathrm{BIB}^{1-n})^! \hookrightarrow (\scrY^{(n)})^! \hookrightarrow (\scrV^{(n)})^! \]
where the superscript on the first arrow means that this map only exists for that value of $n$.\footnote{There are shifted versions of double Poisson algebras, but the results we cite are only phrased for the unshifted case.} We note that the first and third maps do not respect weight-grading, and do not come from taking quadratic dual maps. Taking linear dual, we have quotients of codioperads
\[ \mathrm{DPois}^{\antish} \overset{n=2}\twoheadleftarrow (\mathrm{BIB}^{1-n})^{\antish} \twoheadleftarrow (\scrY^{(n)})^{\antish} \twoheadleftarrow (\scrV^{(n)})^{\antish} \]
and taking cobar, quotients of dg codioperads
\[ \mathrm{DPois}_\infty \overset{n=2}\twoheadleftarrow \Omega (\mathrm{BIB}^{1-n})^{\antish} \twoheadleftarrow \scrY^{(n)}_\infty \twoheadleftarrow \scrV^{(n)}_\infty = \mathrm{pCY}, \]
so the notion of a  $\scrY^{(n)}_\infty$-algebra is extremely close to the notion of a $\scrV^{(n)}_\infty$ or pre-CY algebra; in fact, it is just a pre-CY algebra with vanishing copairing. 
We summarize the results cited above in a table for convenience.
\begin{center}
    \begin{tabular}{|c||c|c|c|c|}
    \hline
                            & $\mathrm{DPois}$ & $\mathrm{BIB}^{\lambda}$  & $\scrY^{(n)}$ & $\scrV^{(n)}$ \\
    \hline
    Dioperad is Koszul?     & Yes              & Unknown                   & Yes           & Yes \\
    \hline
    Properad is Koszul?     & Yes              & No                    & Unknown           & Unknown \\
    \hline
    \end{tabular}
\end{center}

\noindent \textbf{Acknowledgments.} I would like to thank C.~Emprin, E.~Getzler, S.~Merkulov and B.~Vallette for helpful discussions, and especially A.~Khoroshkin and V.~Dotsenko for pointing out an important error in a previous version. This work was supported by the Knut and Alice Wallenberg Foundation.

\section*{Notation and conventions}
\noindent In this note we work over a field $\kk$ of characteristic zero, and all our complexes and algebras are in $\kk$-vector spaces.

\section{Planar dioperads}\label{sec:planar}
\noindent The dioperads that we will consider are all planar in a specific sense; this notion appeared in the work of Ward \cite{ward2019six}. In a loose sense, these are dioperads whose operations are parametrized by directed trees with a planar structure, that is, with the data of the embedding into a disk, modulo isotopy. Let us define this notion a bit more precisely.

For each $k \ge 1$ (outgoing arity) and non-negative integers $i_1,\dots,i_k$ (incoming arities), 
we consider the embeddings
\[ C_k \hookrightarrow \SS_k, \quad C_k \hookrightarrow (\SS_{i_1+\dots+i_k})^{op} \]
of the cyclic group of order $k$ into the symmetric group on $k$ letters, by cyclic permutation, and by inverse permutation of the blocks of $i_1,\dots,i_k$ letters. We think of this $C_k$ action as rotating a disk with $k$ outputs and $i_1 + \dots + i_k$ inputs.
\begin{definition}
    A planar dioperad is a system of $R$-modules $\{\scrP{(k;i_1,\dots,i_k)}\}_{k \ge 1, i_a \ge 0}$, 
    together with an action of $C_k$ on
    \[ \bigoplus_{\{i_a\}} \scrP{(k;i_1,\dots,i_k)} \]
    such that, for any fixed tuple $(i_1,\dots,i_k)$, the generator $e^{2\pi i/k}$ sends 
    \[ \scrP{(k;i_1,\dots,i_k)} \to \scrP{(k;i_2,\dots,i_k,i_1)}, \]
    a unit $R \to \scrP_{(1;1)}$, and a composition map
    \begin{align*} 
        \gamma_{a,b;c} \colon &\scrP{(l;j_1,\dots,j_l)} \otimes \scrP{(k;i_1,\dots,i_k)} \to \\
        &\scrP(k+l-1;i_1,\dots,i_{a-1} + j_b - c,j_{b+1},\dots,j_l,j_1,\dots,j_{b-1},c+ i_a, i_{a+1},\dots,i_k).
    \end{align*}
    The unit and composition maps must intertwine all the $C_k$-actions appropriately, and must satisfy unit and associativity axioms.
\end{definition}

\noindent We avoid writing down with indices the `appropriate' compatibility condition with respect to the cyclic action since the formula is not enlightening, but similar to the usual axioms for a dioperad. Note that the arities $(1;i)$ part of the dioperad form a planar (aka non-$\Sigma$) operad.  

Plainly speaking, to present a planar dioperad $\scrP$, we give an $R$-module of generating operations of arity $(k;i_1 + \dots + i_k)$ for each tuple $(k;i_1,\dots,i_k)$, corresponding to a disc with $k$ outputs and $i_a$ inputs in between each output, and when giving a relation between such operations, we are allowed to use only the cyclic rotations of these diagrams. Note that this is different from the situation of an ordinary dioperad, where we are allowed to use any permutation of inputs and outputs when writing the relations. In this sense, planar dioperads are less general than dioperads, in the same way that non-$\Sigma$ operads are less general than operads. Let us denote by $\PDiop$ the category of planar dioperads. There is a forgetful functor
\[ \Diop \to \PDiop \]
from the category of dioperads, which forgets the $\SS_k \times \SS_{i_1+\dots+i_k}^{op}$ action on $\scrP_{(k;i_1,\dots,i_k)}$ down to a $C_k$ action by the embedding we specified above. This functor has a left adjoint
\[ \Phi \colon \PDiop \to \Diop \]
given by induction of representations. For any planar dioperad $\scrP = \{\scrP_{(k;i_1,\dots,i_k)}\}$, its image under $\Phi$ has components
\[ \Phi \scrP_{(k;i_1,\dots,i_k)} = \scrP_{(k;i_1,\dots,i_k)} \underset{R[C_k]}{\otimes} R\left[\SS_k \times  \SS_{i_1+\dots+i_k}^{op}\right] \] %

\subsection{Koszul duality}
All the notions of quadratic dioperad, its dioperadic Koszul dual, bar-cobar complexes have immediate generalizations to the world of planar dioperads. In particular, any quadratic planar dioperad $\scrP$ has a Koszul dual planar dioperad $\scrP^!$ and Koszul dual planar codioperad $\scrP^{\antish}$, and we have a canonical isomorphism of quadratic dioperads
\[ (\Phi \scrP)^! \cong \Phi (\scrP^!) \]
and a canonical isomorphism of dg codioperads
\[ \mathbf{B}(\Phi \scrP) \cong \Phi_\mathrm{co}(\mathbf{B}_{pl} \scrP) \]
where $\Phi_\mathrm{co}$ denotes the left adjoint to the forgetful functor $\Codiop \to \PCodiop$. 

Just like the bar construction on dioperads (or operads), the bar construction on planar dioperads has a \emph{syzygy} grading, such that the differential has degree $+1$; we denote this grading by a subscript to $\mathbf{B}$. In each arity $(k;i_1,\dots,i_k)$, the isomorphism above implies that we have an \emph{isomorphism} of chain complexes
\[\mathbf{B}^*(\Phi \scrP)_{(k;i_1,\dots,i_k)} \cong (\mathbf{B}^*_{pl} \scrP)_{(k;i_1,\dots,i_k)} \otimes R\left[\textstyle{\prod}_a \SS_{i_a}\backslash \SS_{\sum_a i_a} \right] \]
For any quadratic dioperad $\mathscr{Q}$, there is a canonical isomorphism of dg codioperads
\[ \mathscr{Q}^{\antish} \xrightarrow{\cong} \ker(\mathbf{B}^0 \mathscr{Q} \to \mathbf{B}^1 \mathscr{Q}) = H^0(\mathbf{B}\mathscr{Q}) \]
which in general may fail to be an isomorphism, since $H^*(\mathbf{B}\mathscr{Q})$ could be nontrivial in positive syzygy. Recall now the \emph{Koszul criterion} for Koszulity of a dioperad. One says that the quadratic dioperad $Q$ is \emph{Koszul} when the two following equivalent conditions hold:
\begin{enumerate}[i.]
    \item The canonical map $\Omega(\mathscr{Q}^!) \to \mathscr{Q}$ is a quasi-isomorphism of dg dioperads, and
    \item The canonical map $\mathscr{Q}^{\antish} \to \mathbf{B} \mathscr{Q}$ is a quasi-isomorphism of dg codioperads, which happens if and only if $H^*(\mathbf{B}\mathscr{Q})$ is concentrated in degree zero.
\end{enumerate}
Putting this all together, one can check Koszulity of dioperads of the form $\scrQ = \Phi \scrP$ while staying in the world of planar co/dioperads and their bar complexes.
\begin{proposition}\label{prop:KoszulDiop}
    Let $\scrP$ be a planar dioperad. Then $\Phi \scrP$ is a Koszul dioperad if and only if for each arity $(k;i_1,\dots,i_k)$ the complex $(\mathbf{B}^*_{pl} \scrP)_{(k;i_1,\dots,i_k)}$ has cohomology concentrated in degree zero.
\end{proposition}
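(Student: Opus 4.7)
The plan is to combine the Koszul criterion for dioperads (recalled just before the statement) with the arity-wise identification of bar complexes that was also recorded just above. First, I would invoke criterion (ii): the quadratic dioperad $\Phi\scrP$ is Koszul if and only if $H^*(\mathbf{B}(\Phi\scrP))$ is concentrated in syzygy degree zero. Since the bar complex of a dioperad is computed arity by arity, this condition is equivalent to the vanishing of $H^p\bigl(\mathbf{B}^*(\Phi\scrP)_{(k;i_1,\dots,i_k)}\bigr)$ for all $p>0$ and all arities $(k;i_1,\dots,i_k)$.

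Next, I would invoke the isomorphism of chain complexes
\[ \mathbf{B}^*(\Phi\scrP)_{(k;i_1,\dots,i_k)} \cong (\mathbf{B}^*_{pl}\scrP)_{(k;i_1,\dots,i_k)} \otimes_R R\bigl[\textstyle{\prod}_a \SS_{i_a}\backslash \SS_{\sum_a i_a}\bigr] \]
already established above. The tensor factor on the right is a permutation module, in particular a free (hence flat) $R$-module concentrated in syzygy degree zero and equipped with trivial differential. By flatness, tensoring commutes with taking cohomology, so
\[ H^*\bigl(\mathbf{B}^*(\Phi\scrP)_{(k;i_1,\dots,i_k)}\bigr) \cong H^*\bigl((\mathbf{B}^*_{pl}\scrP)_{(k;i_1,\dots,i_k)}\bigr) \otimes_R R\bigl[\textstyle{\prod}_a \SS_{i_a}\backslash \SS_{\sum_a i_a}\bigr]. \]
Because this free module is nonzero, the vanishing of the left-hand side in a given syzygy degree is equivalent to the vanishing of the corresponding cohomology group of the planar bar complex in that same degree.

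Chaining the two equivalences, $\Phi\scrP$ is Koszul precisely when $H^p\bigl((\mathbf{B}^*_{pl}\scrP)_{(k;i_1,\dots,i_k)}\bigr)=0$ for all $p>0$ and all arities $(k;i_1,\dots,i_k)$, which is exactly the statement of the proposition. The only real subtlety I anticipate is checking that the displayed isomorphism respects the syzygy grading, which I would verify by observing that syzygy degree is counted by the number of internal edges of the underlying planar directed tree and that this datum is preserved under the induction $\Phi_\mathrm{co}$ from planar codioperads to codioperads; all the remaining ingredients are either definitional or standard homological algebra.
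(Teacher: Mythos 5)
Your proposal is correct and is essentially the argument the paper intends: the proposition is stated as an immediate consequence of the Koszul criterion (ii) together with the displayed arity-wise isomorphism $\mathbf{B}^*(\Phi \scrP)_{(k;i_1,\dots,i_k)} \cong (\mathbf{B}^*_{pl} \scrP)_{(k;i_1,\dots,i_k)} \otimes R\left[\textstyle{\prod}_a \SS_{i_a}\backslash \SS_{\sum_a i_a}\right]$, and your observation that tensoring with this nonzero free $R$-module both commutes with cohomology and detects vanishing is exactly the point. Your remark about the syzygy grading being preserved (internal-edge count is unaffected by the induction $\Phi_{\mathrm{co}}$) is a reasonable check that the paper leaves implicit.
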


\noindent By definition, the complex $(\mathbf{B}^*_{pl} \scrP)_{(k;i_1,\dots,i_k)}$ has generators indexed by directed trees embedded in the disk modulo isotopies, where each vertex has a distinguished outgoing arrow (marking the first output) and is labeled by an element of the appropriate space $\scrP_{(k;i_1,\dots,i_k)}$. Note that since we excluded outgoing arity zero, we must not have any sinks in this directed tree. The differential is given by summing over \emph{contractions} of internal edges where we apply the appropriate composition map.

\section{Koszulity of the dioperad $\scrV^{(n)}$}\label{sec:koszulity}
\noindent We now recall the result of Poirier and Tradler establishing the Koszulity of the dioperad $\scrV^{(n)}$, and give it a more geometric interpretation. This is a quadratic dioperad defined as
\[ \scrV^{(n)} \coloneqq \frac{\calT\left(\mu =
    \begin{tikzpicture}[baseline=0ex,scale=0.2]
        \draw (0,2) node[above] {$\scriptstyle{1}$}
        -- (2,1);
        \draw (4,2) node[above] {$\scriptstyle{2}$}
        -- (2,1) -- (2,0) node[below] {$\scriptstyle{1}$};
        \draw[fill=white] (2,1) circle (10pt);
    \end{tikzpicture}
    ~ ; ~ \nu = 
    \begin{tikzpicture}[baseline=0ex,scale=0.2]
        \draw (0,0)  node[below] {$\scriptstyle{1}$} -- (0,0.5);
        \draw (2,0) node[below] {$\scriptstyle{2}$} -- (2,0.5);
        \draw[fill=white] (-0.3,0.5) rectangle (2.3,1);
    \end{tikzpicture} ~ = ~
    \begin{tikzpicture}[baseline=0ex,scale=0.2]
        \draw (0,0)  node[below] {$\scriptstyle{2}$} -- (0,0.5);
        \draw (2,0) node[below] {$\scriptstyle{1}$} -- (2,0.5);
        \draw[fill=white] (-0.3,0.5) rectangle (2.3,1);
    \end{tikzpicture}
    \right)}
    {\left(\begin{tikzpicture}[baseline=0.5ex,scale=0.2]
        \draw (0,4) node[above] {$\scriptstyle{1}$} --(4,0)
        -- (4,-1) node[below] {$\scriptstyle{1}$};
        \draw (4,4) node[above] {$\scriptstyle{2}$} -- (2,2);
        \draw (8,4) node[above] {$\scriptstyle{3}$}-- (4,0);
        \draw[fill=white] (2,2) circle (10pt);
        \draw[fill=white] (4,0) circle (10pt);
    \end{tikzpicture}
    -
    \begin{tikzpicture}[baseline=0.5ex,scale=0.2]
        \draw (0,4) node[above] {$\scriptstyle{1}$}
        -- (4,0) -- (4,-1) node[below] {$\scriptstyle{1}$};
        \draw (4,4) node[above] {$\scriptstyle{2}$} -- (6,2);
        \draw (8,4) node[above] {$\scriptstyle{3}$} -- (4,0);
        \draw[fill=white] (6,2) circle (10pt);
        \draw[fill=white] (4,0) circle (10pt);
    \end{tikzpicture}
    ~;~
    \begin{tikzpicture}[scale=0.2,baseline=1ex]
        \draw (0,4) node[above] {$\scriptstyle{1}$} -- (0,2) -- (1,0.5) -- (1,-0.5) node[below] {$\scriptstyle{1}$};
        \draw (4,-0.5) node[below] {$\scriptstyle{2}$} -- (4,2) ;
        \draw (2,2) -- (1,0.5);
        \draw[fill=white] (1,0.5) circle (10pt);
        \draw[fill=white] (1.7,2) rectangle (4.3,2.5);
    \end{tikzpicture}
    ~ - ~
    \begin{tikzpicture}[scale=0.2,baseline=1ex]
        \draw (0,4) node[above] {$\scriptstyle{1}$} -- (0,2) -- (-1,0.5) -- (-1,-0.5) node[below] {$\scriptstyle{2}$};
        \draw (-4,-0.5) node[below] {$\scriptstyle{1}$} -- (-4,2) ;
        \draw (-2,2) -- (-1,0.5);
        \draw[fill=white] (-1,0.5) circle (10pt);
        \draw[fill=white] (-1.7,2) rectangle (-4.3,2.5);
    \end{tikzpicture} \right)} \]
where $\mu$ is in homological degree zero and $\nu$ in homological degree $-n$, and the symbol $\calT$ indicates the free dioperad generated by those elements; this is indexed over directed trees. In other words, a $\scrV^{(n)}$-algebra is a graded associative algebra $(A,\mu)$ with a symmetric copairing $\nu$ of degree $-n$, satisfying
\[ \mu(x,\nu') \otimes \nu'' = (-1)^{n|x|} \nu' \otimes \mu(\nu'',x) \]
for all $x \in A$.

This is a planar dioperad, in the sense that it is in the image of the functor $\Phi$. To see this, note that the symmetry condition of $\nu$ simply says that it lives in the trivial representation of $C_2$, and in the two generating relations, we only used cyclic permutations, in fact, only the trivial permutation. So if we define
\[ \scrV^{(n)}_{pl} = \calT_{pl}(\dots)/(\dots) \]
with the same diagrams as above, we have a planar dioperad such that $\scrV^{(n)} \cong \Phi(\scrV^{(n)}_{pl})$. Poirier and Tradler's proof of the Koszulity of $\scrV^{(n)}$ relies on the identification of the complexes $(\mathbf{B}^*_{pl} P)_{(k;i_1,\dots,i_k)}$ with the cellular chain complex of certain polytopal complexes called \emph{assocoipahedra}. We rephrase their result in \cite{poirier2019koszuality} in the language of planar dioperads.
\begin{theorem}
    There is an isomorphism
    \[ (\mathbf{B}^*_{pl} (\scrV^{(n)}_{pl})^!)_{(k;i_1,\dots,i_k)} \cong C_*(Z_{(k;i_1,\dots,i_k)}) \]
    between the bar complex of the planar Koszul dual of the $\scrV^{(n)}$-dioperad and the cellular chain complex of an assocoipahedron.
\end{theorem}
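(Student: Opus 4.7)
The plan is to match the two sides of the claimed isomorphism first on generators and then on differentials. First I would compute the planar Koszul dual $(\scrV^{(n)}_{pl})^!$ explicitly: dualizing the quadratic presentation, the generators are a coproduct dual to $\mu$ and a shifted pairing dual to $\nu$, and the Koszul dual relations in each planar arity are those orthogonal (with respect to the natural pairing on planar trees with two vertices) to the relations of $\scrV^{(n)}_{pl}$. With this description in hand, a basis for $(\mathbf{B}^*_{pl}(\scrV^{(n)}_{pl})^!)_{(k;i_1,\dots,i_k)}$ is indexed by planar directed trees embedded in a disc with $k$ outputs and $i_a$ inputs between consecutive outputs, with vertices labeled by the two generators and respecting the cyclic markings, with syzygy degree recording the number of internal edges.

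Next I would recall the combinatorial description of the assocoipahedra $Z_{(k;i_1,\dots,i_k)}$ from \cite{poirier2018combinatorics}: their cells are indexed by precisely the same family of planar labeled trees in a disc, with cell dimension matching the syzygy grading on the bar side. Concretely, one writes down a bijection sending each tree in the bar basis to the corresponding cell; this is a direct translation of Poirier--Tradler's indexing once one notes that their labeled trees live in the disc with the same input/output arrangement that we used to index the components of a planar dioperad.

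Third I would check that the two differentials agree under this bijection. The bar differential is a signed sum over contractions of internal edges, where contracting an edge replaces its two endpoint vertices by the appropriate composition of their labels in $(\scrV^{(n)}_{pl})^!$; the cellular boundary of $Z_{(k;i_1,\dots,i_k)}$ is a signed sum over codimension-one faces, each given by collapsing an internal edge of the indexing tree. That these sums match, term by term and up to sign, is again Poirier--Tradler's observation, translated into our language. The only thing to verify is that the planar dioperadic composition rule reproduces their combinatorial gluing rule on cells, which follows because both operations act identically on the underlying planar tree combinatorics.

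The main obstacle I expect is the sign-matching: the Koszul duals $\mu^*, \nu^*$ come with operadic suspensions (degree shifts involving $n$), and these shifts propagate into the Koszul signs of the bar differential as well as into chosen orientations of the cells of $Z_{(k;i_1,\dots,i_k)}$. I would fix an orientation of each cell induced by an ordering of internal edges of the indexing tree and then verify that the induced sign in the cellular boundary equals the Koszul sign of the corresponding edge contraction. Once the bijection and the signed differential match are in place, the identification of \cite{poirier2019koszuality} gives the isomorphism, and \cref{prop:KoszulDiop} combined with the known contractibility of the assocoipahedra yields Koszulity of $\scrV^{(n)}$ as a dioperad.
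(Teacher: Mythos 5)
Your proposal is correct and follows essentially the same route as the paper: the paper gives no independent proof of this statement, presenting it explicitly as a rephrasing of the identification in \cite{poirier2019koszuality} into the language of planar dioperads, and your outline (bijection of tree-indexed bases, matching of edge-contraction differentials, sign/orientation bookkeeping) is precisely the content of that citation spelled out. The only imprecision is your phrase ``vertices labeled by the two generators,'' which literally describes only the syzygy-degree-zero part; in the full bar complex the vertices carry arbitrary elements of $(\scrV^{(n)}_{pl})^!$ (which is one-dimensional in each planar arity, so the basis is still indexed by bare planar directed trees, as your remark on the syzygy grading already implicitly uses).
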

\noindent These polytopal complexes were earlier proven by the same authors to be contractible, and later realized as convex polytopes by Pilaud in \cite{pilaud2022pebble}.

\subsection{Polynomial quadratic differentials}
We will now describe how these complexes relate to moduli spaces of meromorphic Strebel differentials on $\mathbb{CP}^1$ with one higher-order pole. This relation fits into a larger picture of moduli spaces of meromorphic differentials on curves of all genera, that was worked out in \cite{kontsevich2025pre}. The case of genus zero curves with one or two punctures, however, does not fit exactly in what is written in that reference, since it relies on a theorem of \cite{gupta2019meromorphic} which requires
\[ 2g-2-\text{number of punctures} <0. \]
Therefore we need to use another description in our case; here we use instead \cite{dias2021quadratic}, following earlier results of \cite{au2006prescribed}.

\noindent Let us fix $N \ge 2$ and let $Q(N+2)$ denote the space of monic centered complex polynomials of degree $(N-2)$; centered indicates that the sum of its zeros vanishes. We identify these polynomials with quadratic differentials by setting
\[ \varphi(z) = f(z) dz^2, \]
which is a meromorphic quadratic differential on $\mathbb{CP}^1$ with a pole of order $N+2$ at infinity; the difference of $4$ from the degree of $f$ comes from the $dz^2$ factor by changing coordinates $z \mapsto 1/z$. For large $|z|$ we can approximate
\[ \varphi(z) \sim z^{N-2} dz^2 \]
which is a quadratic differential that has a single zero at the origin, with $N$ \emph{critical leaves} going to infinity at angles $2\pi k/N$; this is the asymptotic behavior of the critical leaves of $\varphi(z)$.

To each quadratic differential $\varphi$ on some complex curve $\Sigma$ we can assign two \emph{measured foliations}; these are foliations endowed with transverse measures. We assign to $\varphi$ the \emph{horizontal measured foliation} and the \emph{vertical measured foliation}. The leaves of these foliations are defined by requiring the tangent vector $\vec{v}$ to satisfy
\[ \varphi(\vec{v},\vec{v}) > 0 \text{\ and\ } \varphi(\vec{v},\vec{v}) < 0,\]
respectively; note that for the constant quadratic differential $dz^2$ on $\mathbb{C}$ these are the horizontal and vertical lines on the complex plane. 

For any fixed genus and prescribed pole data, there is a corresponding topological space of measured foliations on a punctured topological surface of genus $\Sigma$ with prescribed behavior at infinity; in our case let us denote by $\mathrm{MF}(N+2)$ the measured foliations on the 2-sphere that have a pole of order $(N+2)$ at infinity. We then have two maps
\[ Q(N+2) \overunderset{p_h}{p_v}{\rightrightarrows} \mathrm{MF}(N+2), \]
assigning to a quadratic differential its horizontal and vertical foliations. Moreover, given any measured foliation, we get a metric on its leaf space, which is a tree; this gives an isomorphism 
\[ \mathrm{MF}(N+2) \xrightarrow{\cong} \mathrm{MetTr}(N) \]
to the space of metric trees with $N$ infinite-length edges going to infinity; this space is homeomorphic to $\mathbb{R}^{N-3}$.
\begin{theorem}\label{thm:homeo}
    For any $N \ge 3$, the map
    \[ (p_h, p_v) \colon Q(N+2) \to \mathrm{MF}(N+2) \times  \mathrm{MF}(N+2) \]
    that assigns to a quadratic differential its horizontal and vertical foliations is a homeomorphism.
\end{theorem}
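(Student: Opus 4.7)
The plan is to prove the theorem by combining a dimension count with uniqueness (injectivity) and existence (surjectivity) of polynomial quadratic differentials with prescribed horizontal and vertical foliations, then upgrading the continuous bijection to a homeomorphism via invariance of domain and a properness argument. Both sides have real dimension $2(N-3)$: the space $Q(N+2)$ of monic centered polynomials of degree $N-2$ is $\mathbb{C}^{N-3}$, parametrized by the $N-3$ free complex coefficients, while $\mathrm{MF}(N+2)^2 \cong \mathbb{R}^{2(N-3)}$ via the tree identification $\mathrm{MF}(N+2) \cong \mathrm{MetTr}(N) \cong \mathbb{R}^{N-3}$.

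Continuity of $(p_h,p_v)$ is a standard consequence of the fact that transverse measures of compact arcs are computed as integrals of $|\mathrm{Re}\,\sqrt{\varphi}|$ and $|\mathrm{Im}\,\sqrt{\varphi}|$, which depend continuously on the coefficients of $\varphi$; the monic normalization at infinity ensures that the $N$ asymptotic ends of the foliations also vary continuously as the polynomial is perturbed.

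For injectivity, I would argue pointwise by flat coordinates. Around any non-critical point $z_0$, introduce the distinguished coordinate $w = \int_{z_0}^z \sqrt{\varphi}$, so that $\varphi = dw^2$ and the horizontal and vertical foliations of $\varphi$ become the standard horizontal and vertical line fields in the $w$-plane equipped with their Euclidean transverse measures. The pair of foliations then determines $w$ up to $w \mapsto \pm w + c$, which is precisely the ambiguity that leaves $dw^2$ invariant. Hence $\varphi$ is uniquely recovered on the non-critical locus and extends uniquely across the isolated critical points (where it locally equals $z^k\, dz^2$), so two polynomials with the same $(p_h, p_v)$ must coincide.

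The main obstacle is surjectivity. Given a pair of measured foliations in $\mathrm{MF}(N+2)^2$, one must construct a polynomial quadratic differential realizing them. My approach would be to glue flat charts: a transverse pair of measured foliations away from their common critical set endows $\mathbb{CP}^1$ minus finitely many points with a flat structure, which produces a holomorphic quadratic differential extending as $z^k \, dz^2$ near each zero of order $k$; the pole order at infinity is read off as $N+2$ from the $N$ infinite-length edges of the underlying metric trees, and monicity can be fixed by rescaling. This is a Hubbard--Masur type existence statement, and in the precise polynomial setting it is exactly what is proven in \cite{au2006prescribed} and \cite{dias2021quadratic}; invoking those references is the natural way to close this step. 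Once bijectivity is established, properness is a short check: if a sequence in $Q(N+2)$ escapes to infinity, the leading term forces the transverse measure of sufficiently large horizontal or vertical arcs to diverge, so its image cannot remain in a compact subset of $\mathrm{MF}(N+2)^2$. Invariance of domain for a continuous proper bijection between manifolds of the same dimension then finishes the proof.
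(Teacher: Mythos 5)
The paper does not actually prove \cref{thm:homeo}: it imports the statement wholesale from \cite{dias2021quadratic}, building on \cite{au2006prescribed}, so there is no internal argument to compare yours against. Your outline of the dimension count, the continuity of $(p_h,p_v)$, and the final upgrade from continuous bijection to homeomorphism is reasonable (in fact, once you have a continuous bijection between manifolds of the same dimension, invariance of domain already makes it open, so the properness discussion is not needed for that last step). Deferring surjectivity to \cite{au2006prescribed,dias2021quadratic} is also consistent with how the paper itself treats the result.

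There is, however, a genuine gap in your injectivity argument. A point of $\mathrm{MF}(N+2)$ is not a literal point-set foliation on $\mathbb{CP}^1$ but an equivalence class: the paper identifies $\mathrm{MF}(N+2)$ with the space $\mathrm{MetTr}(N)$ of metric leaf-space trees, so the only data a point of $\mathrm{MF}(N+2)$ retains is the isometry type of the leaf space with its labeled infinite ends. Your flat-coordinate argument shows that the \emph{actual} horizontal and vertical measured foliations, as geometric objects on the plane, determine $\varphi$ pointwise --- which is essentially tautological. It does not rule out two distinct monic centered polynomials whose horizontal foliations have isometric leaf trees and whose vertical foliations have isometric leaf trees. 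That uniqueness statement is precisely the nontrivial Hubbard--Masur/Gardiner--Masur-type content of the theorem in this polynomial setting, and it is exactly what the cited references establish. Since you also outsource surjectivity to those same references, your proposal in effect reduces the theorem to itself; that is an acceptable way to \emph{use} the result, as the paper does, but the injectivity step should not be presented as an independent elementary argument.
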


\noindent We say that a quadratic differential is \emph{Strebel} if its horizontal foliation has measure zero. The theorems above then give identifications
\[ Q^\mathrm{Str}(N+2) = p_h^{-1}(0) \cong \mathrm{MF}(N+2) \cong \mathrm{MetTr}(N) \cong \mathbb{R}^{N-3}, \]
between a monic centered polynomial $f(z)$ whose quadratic differential $f(z)dz^2$ is Strebel, the vertical foliation of that quadratic differential and the metric tree given by its leaf space.

\subsection{Regularized Strebel differentials}
Each metric tree in $\mathrm{MetTr}(N)$ has $N$ leaves of infinite length. We will now divide these $N$ leaves into incoming and outgoing leaves, and \emph{regularize} the outgoing leaves to have finite length. Let $(k;i_1,\dots,i_k)$ be a tuple of integers where $k \ge 2$ and each $i_a \ge 0$.
\begin{definition}
    The regularized moduli space of Strebel differentials associated to the tuple $(k;i_1,\dots,i_k)$ is the space 
    \[ Q^\mathrm{Str}_\mathrm{reg}(k;i_1,\dots,i_k) = Q^\mathrm{Str}(k + i_1 + \dots + i_k + 2) \times (\mathbb{R}_{>0})^k/\mathbb{R}_{>0}, \]
    when $k+ i_1 + \dots + i_k \ge 3$. Here the monoid $\mathbb{R}_{>0}$ acts on $(\mathbb{R}_{>0})^k$ by addition to all coordinates. We set $Q^\mathrm{Str}_\mathrm{reg}(2;0,0) = \{*\}$ for consistency.
\end{definition}
\noindent To interpret each point in the regularized moduli space, given 
\[ \varphi \in  Q^\mathrm{Str}(k + i_1 + \dots + i_k)\]
we divide its outgoing leaves into $k$ outgoing leaves, bounding blocks of $i_1,\dots,i_k$ incoming leaves, and associate to the point
\[ (\varphi,\lambda_1,\dots,\lambda_k) \in Q^\mathrm{Str}_\mathrm{reg}(k;i_1,\dots,i_k) \]
its metric tree but with finite \emph{lengths} $\lambda_1,\dots,\lambda_k$ for each of the outgoing $k$ leaves, modulo an overall shift. In other words, we regularize the diagram by cutting $k$ infinite edges at distances $\lambda_1,\dots,\lambda_k$ from the internal vertex to which they are attached. As a consequence of \cref{thm:homeo}, we immediately have:
\begin{corollary}
    For any tuple $(k;i_1,\dots,i_k)$, there is a homeomorphism
    \[ Q^\mathrm{Str}_\mathrm{reg}(k;i_1,\dots,i_k) \cong \RR^{2k+i_1+ \dots + i_k -4}. \]
\end{corollary}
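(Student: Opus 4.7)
The plan is a straightforward computation combining the homeomorphism from \cref{thm:homeo} with an elementary analysis of the regularization factor. The real content has already been established earlier in the section, so what remains is essentially bookkeeping of dimensions.

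First I would set $N = k + i_1 + \dots + i_k$ and invoke \cref{thm:homeo}, or more precisely its consequence stated just before the definition of the regularized moduli space: whenever $N \ge 3$, one has
\[ Q^\mathrm{Str}(N+2) \cong \mathrm{MF}(N+2) \cong \mathrm{MetTr}(N) \cong \RR^{N-3} = \RR^{k+i_1+\dots+i_k-3}. \]
This is the first factor in the defining product $Q^\mathrm{Str}(k+i_1+\dots+i_k+2)\times(\RR_{>0})^k/\RR_{>0}$.

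Next I would analyze the quotient $(\RR_{>0})^k/\RR_{>0}$. The additive diagonal action of $\RR_{>0}$ generates the equivalence relation identifying $(\lambda_1,\dots,\lambda_k)$ with $(\lambda_1+s,\dots,\lambda_k+s)$ whenever both tuples lie in $(\RR_{>0})^k$; a complete set of invariants is given by the $k-1$ differences $\lambda_i - \lambda_1$ for $i=2,\dots,k$. Any tuple of differences $(d_2,\dots,d_k) \in \RR^{k-1}$ is realized: pick $\lambda_1$ large enough that $\lambda_1 > 0$ and $\lambda_1 + d_i > 0$ for all $i$. The resulting map from the space of differences to the quotient is a continuous bijection with continuous inverse, yielding a homeomorphism $(\RR_{>0})^k/\RR_{>0} \cong \RR^{k-1}$.

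Combining these two identifications via the product gives
\[ Q^\mathrm{Str}_\mathrm{reg}(k;i_1,\dots,i_k) \cong \RR^{k+i_1+\dots+i_k-3} \times \RR^{k-1} = \RR^{2k+i_1+\dots+i_k-4}, \]
establishing the claim in the non-degenerate case. Finally I would verify the edge case $(k;i_1,\dots,i_k) = (2;0,0)$, where the formula produces $\RR^{2\cdot 2 + 0 - 4} = \RR^0 = \{*\}$, matching the stipulated convention. There is no serious obstacle: the deep input is \cref{thm:homeo}, and the only point requiring slight care is the interpretation of $(\RR_{>0})^k/\RR_{>0}$ as the quotient by the equivalence relation generated by the additive monoid action rather than a literal group quotient.
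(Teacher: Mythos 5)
Your proposal is correct and follows essentially the same route as the paper, which simply records this statement as an immediate consequence of \cref{thm:homeo} applied to the first factor of the defining product; your explicit identification $(\RR_{>0})^k/\RR_{>0}\cong\RR^{k-1}$ and the resulting dimension count $(N-3)+(k-1)=2k+i_1+\dots+i_k-4$ are exactly the bookkeeping the paper leaves implicit, including the degenerate case $(2;0,0)$.
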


\noindent Let $(\Gamma,g)$ be the metric graph corresponding to $\varphi$, and let us call the $\Lambda \subset \Gamma$ the subset of these $k$ cutoff points. By the discussion above, the data $(\Gamma,g,\lambda)$ is equivalent to the data of $(\varphi,\lambda_1,\dots,\lambda_k)$. This data gives a \emph{directed graph structure} to the corresponding critical graph, given by taking the gradient of the distance to $\Lambda$ along the graph. There might be some points in the interior of edges of $\Gamma$ at which this distance function attains a local maximum and is not differentiable; these points become vertices with two outgoing edges. There will generically be $(k-1)$ of these points.

In conclusion, the regularized moduli space $Q^\mathrm{Str}_\mathrm{reg}(k;i_1,\dots,i_k)$ is homeomorphic to an open ball of dimension $(2k + i_1 + \dots + i_k -4)$, and has a cellular decomposition into cells labeled by directed planar graphs $\Gamma$ without sinks and possibly with bivalent vertices that have two outgoing edges. These are \textit{non-compact cells}, and we distinguish different parts of their boundaries:
\begin{enumerate}
    \item boundaries ``at finite distance'' where some of the internal edge lengths go to zero, and
    \item boundaries ``at infinity'', where some of the internal edge lengths go to zero and/or some of the lengths of some of the $k$ outgoing edges go to zero or infinity.
\end{enumerate}
The cellular chain differential we consider is then given by
\[ \delta(c_\Gamma) = \sum_{c' \in \text{finite-distance boundary}} \pm c' \] 
that is, given by a sum over cells labeled by graphs obtained by contracting an internal edge of $\Gamma$. These are precisely the same cells and formula for their differentials of the corresponding assocoipahedron, with only a difference in signs and reversed degrees. Checking compatibility of sign conventions gives the following theorem.
\begin{theorem}
    For any tuple $(k;i_1,\dots,i_k)$, the non-compact cell complex $Q^\mathrm{Str}_\mathrm{reg}(k;i_1,\dots,i_k)$ is the dual cell complex of the assocoipahedron $Z_{(k;i_1,\dots,i_k)}$. As a consequence, we have an isomorphism
    \[ (C^{2k+i_1+\dots+i_k-4-*}(Q^\mathrm{Str}_\mathrm{reg}(k;i_1,\dots,i_k),\ZZ), \delta) \cong (C_*(Z_{(k;i_1,\dots,i_k)},\ZZ),d) \]
    between the cellular cochain complex of the regularized moduli space and the cellular chain complex of the assocoipahedron.
\end{theorem}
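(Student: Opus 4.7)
The plan is to construct a dimension-reversing, inclusion-reversing bijection between the cells of $Q^{\mathrm{Str}}_{\mathrm{reg}}(k;i_1,\dots,i_k)$ and those of the assocoipahedron $Z_{(k;i_1,\dots,i_k)}$, and then check that the induced identification of cellular cochains with cellular chains intertwines the differentials, up to signs.

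First I would catalog the cells on both sides in a common combinatorial language. From the earlier analysis, each cell of $Q^{\mathrm{Str}}_{\mathrm{reg}}(k;i_1,\dots,i_k)$ is labeled by a directed planar graph $\Gamma$ without sinks (possibly containing the bivalent split-vertices introduced by the cutoff construction) having $k$ outgoing external edges interspersed with blocks of $i_1,\dots,i_k$ incoming edges in cyclic order. The open cell carries continuous coordinates given by the internal edge lengths of $\Gamma$ together with the cutoff parameters $\lambda_1,\dots,\lambda_k$, modulo the overall diagonal shift, so its dimension is $m(\Gamma)+k-1$, where $m(\Gamma)$ is the number of internal edges. By the results of Poirier and Tradler, the cells of $Z_{(k;i_1,\dots,i_k)}$ are indexed by exactly the same combinatorial data, with dimensions complementary to the top dimension $D = 2k+i_1+\dots+i_k-4$. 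This yields a canonical bijection of cells which reverses dimensions with respect to $D$.

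Second, I would match the face relations. A codimension-one finite-distance degeneration of $C_\Gamma$ arises by letting a single internal edge length go to zero and contracting that edge to produce a graph $\Gamma'$; under the bijection this corresponds precisely to the codimension-one face inclusion $C_{\Gamma'} \subset \overline{C_\Gamma}$ in the assocoipahedron, since the face poset of $Z_{(k;i_1,\dots,i_k)}$ is built from edge expansions in the Poirier--Tradler description. Consequently the closure of a moduli cell $C_\Gamma$ corresponds to the star of $C_\Gamma$ viewed as an assocoipahedral cell, which is exactly the condition for dual cell structures. The ``boundary at infinity'' strata correspond to degenerations removed when compactifying the assocoipahedron, and hence contribute neither to the cellular cochain differential on the left nor to the chain differential on the right.

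Finally, I would verify sign compatibility. Each moduli cell carries the canonical orientation induced by the ordered product $\mathbb{R}_{>0}^{m(\Gamma)}\times \mathbb{R}_{>0}^{k-1}$ after the quotient by the overall shift, and contraction of the $j$-th internal edge then contributes a sign $(-1)^{j-1}$ to the cellular coboundary, while the cellular chain differential of $Z_{(k;i_1,\dots,i_k)}$ assigns signs by a fixed enumeration of internal edges and a choice of polytope orientation. I would fix compatible edge enumerations on both sides, propagate them through the bijection, and compare the two sign systems, checking that they agree up to the uniform reversal induced by the regrading $* \mapsto D-*$. The main obstacle is precisely this bookkeeping step: ensuring that no $\Gamma$-dependent sign twist survives beyond the uniform degree-reversal, so that the bijection genuinely upgrades to an isomorphism of chain complexes rather than merely of graded groups. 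Once this is settled, the stated isomorphism follows at once.
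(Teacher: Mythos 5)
Your proposal follows essentially the same route as the paper: the paper's argument consists precisely of identifying the cells on both sides by the same directed planar graphs (without sinks, possibly with bivalent two-output vertices), observing that the codimension-one finite-distance degenerations are internal edge contractions matching the assocoipahedral face relations while the boundary at infinity is discarded, and then checking signs --- which the paper likewise treats as the only remaining bookkeeping.

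One correction to your cell catalogue, however. The cutoff parameters $\lambda_1,\dots,\lambda_k$ are \emph{not} coordinates independent of the internal edge lengths of $\Gamma$ once the bivalent source vertices are included in $\Gamma$: those vertices are pinned at the local maxima of the distance-to-$\Lambda$ function, i.e.\ they must be equidistant to $\Lambda$ along both outgoing directions, which imposes one linear relation per bivalent vertex among the edge lengths. The open cell labelled by $\Gamma$ therefore has dimension equal to the number $m(\Gamma)$ of internal edges of $\Gamma$, not $m(\Gamma)+k-1$; with your formula the generic cell (underlying trivalent tree with $k-1$ bivalent vertices) would have dimension exceeding the ambient dimension $2k+i_1+\dots+i_k-4$. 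With $\dim = m(\Gamma)$ the complementarity with the assocoipahedral grading comes out correctly, and the rest of your argument --- the inclusion-reversing bijection, the matching of contraction-induced face relations, and the sign comparison --- goes through as you describe.
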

\noindent This identification gives another proof of the result of Poirier and Tradler.
\begin{corollary}
    The assocoipahedron is contractible, and therefore the dioperad $\scrV^{(n)}$ is Koszul.
\end{corollary}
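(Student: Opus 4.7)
The plan is to combine the preceding homeomorphism $Q^\mathrm{Str}_\mathrm{reg}(k;i_1,\dots,i_k) \cong \RR^{2k+i_1+\dots+i_k-4}$ with the chain-complex isomorphism from the immediately preceding theorem. Since the regularized moduli space is homeomorphic to an affine space, it is contractible; the heart of the argument is to translate this contractibility into acyclicity of the bar complex via the degree-reversing identification with $C_*(Z_{(k;i_1,\dots,i_k)})$.

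More precisely, the cellular differential $\delta$ sums only over finite-distance contractions of internal edges and omits the boundary-at-infinity cells, so the resulting complex naturally computes the relative cohomology of $\RR^N$ modulo its boundary at infinity, equivalently the reduced cohomology of the one-point compactification $(\RR^N)^+ \cong S^N$. This is $\ZZ$ concentrated in degree $N = 2k + i_1 + \dots + i_k - 4$. Under the reindexing $* \mapsto N - *$ in the theorem, this single class lands in degree zero of $C_*(Z_{(k;i_1,\dots,i_k)})$ while all higher degrees vanish, so $C_*(Z_{(k;i_1,\dots,i_k)})$ is acyclic above degree zero.

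For full contractibility of the assocoipahedron itself, one can either invoke Pilaud's realization as a convex polytope (giving contractibility directly by convexity), or combine the acyclicity just established with a direct verification of simple-connectedness from its regular CW structure and apply Whitehead's theorem. Koszulity of $\scrV^{(n)}$ then follows: the identification by Poirier and Tradler of $(\mathbf{B}^*_{pl}(\scrV^{(n)}_{pl})^!)_{(k;i_1,\dots,i_k)}$ with $C_*(Z_{(k;i_1,\dots,i_k)})$, combined with \cref{prop:KoszulDiop}, reduces the task to exactly the acyclicity of these chain complexes above degree zero, which has just been established.

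The main obstacle is the degree-bookkeeping outlined above, namely verifying that the finite-distance cellular differential yields the compactly-supported (or relative-to-infinity) version of cohomology rather than ordinary cohomology; this amounts to carefully matching the conventions in the theorem's degree reindexing and checking signs, and is a technical check rather than a genuinely difficult argument.
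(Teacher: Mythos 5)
Your proposal is correct and follows essentially the route the paper intends: the corollary is presented as an immediate consequence of the homeomorphism $Q^\mathrm{Str}_\mathrm{reg}(k;i_1,\dots,i_k) \cong \RR^{2k+i_1+\dots+i_k-4}$ and the dual-cell-complex identification with $C_*(Z_{(k;i_1,\dots,i_k)})$, with Koszulity then read off from \cref{prop:KoszulDiop}. Your care with the Borel--Moore/compactly-supported indexing and the appeal to Pilaud's polytopal realization for honest contractibility only make explicit what the paper leaves implicit.
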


\section{Koszulity of the dioperad $\scrY^{(n)}$}\label{KoszulityY}
\noindent We now use the same strategy as in the previous section to give a geometric interpretation of the bar complex in the setting of the dioperad $\scrY^{(n)}$. 

\subsection{Cloven quadratic differentials}
We will now look at a certain subspace of quadratic differentials whose associated metric tree is naturally split into regions. Let
\[ (k;i_1,\dots,i_k) \] 
be a tuple specifying the arrangement of in and out arrows around a disk.
\begin{definition}
    Let $(\varphi,\lambda_1,\dots,\lambda_k)$ be a regularized Strebel differential. A \emph{saddle cut} of $\varphi$ is a regular leaf of its \emph{vertical} foliation $p_v(\varphi)$, whose corresponding point in the critical graph of $\varphi$ is a \emph{local maximum} of the function given by the distance to the cutoff set $\Lambda$.
\end{definition}
\noindent Let us motivate the name ``saddle cut''. From the data of $\varphi$ and the cutoff set $\Lambda$ we have a distance-to-$\Lambda$ function $\mathrm{dist}_\Lambda$ on $\CC$; this is a piecewise smooth function, which restricts to the distance function on the critical graph. The unique intersection between a saddle cut $\gamma$ and the critical graph $\Gamma$ is then by definition a saddle point of the function $\mathrm{dist}_\Lambda$.
\begin{figure}[h!]
    \centering
    \includegraphics[width=0.4\linewidth]{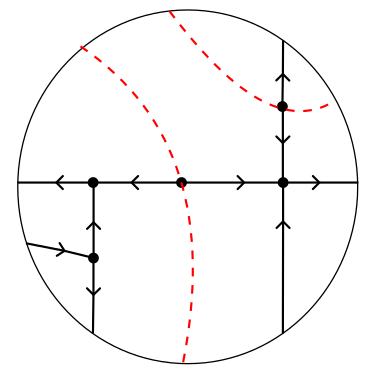}
    \caption{Depiction of the two saddle cuts of a certain regularized Strebel differential $(\varphi,\lambda_1,\dots,\lambda_4)$ with arrangement tuple $(4;0,0,1,1)$. The black lines depict the directed tree associated to the regularized differential; as a tree this is the union of critical horizontal leaves of $\varphi$, and the directions point towards the negative gradient of the distance to the regularizing set. Note that this distance function has two local maxima, corresponding to the saddle cuts in dashed red.}
    \label{fig:differential}
\end{figure}

\begin{lemma}\label{prop:saddle}
    Every saddle cut $\gamma$ is asymptotic to exactly two distinct rays among those with angles
    \[ \frac{2\pi (j+\tfrac{1}{2})}{(k+i_1+\dots+i_k)},\quad  j = 0,\dots, k+i_1+\dots+i_k-1. \]
\end{lemma}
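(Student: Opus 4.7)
The plan is to combine the local normal form of $\varphi$ near the pole with the global tree structure of the critical graph $\Gamma$, reducing the main content to two Euler characteristic counts.

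\emph{Local part.} Near infinity $\varphi\sim z^{N-2}dz^2$ with $N = k+i_1+\cdots+i_k$. In the standardizing coordinate $w = \int\sqrt{\varphi}\approx (2/N)z^{N/2}$ the differential becomes $dw^2$, so a vertical leaf is a line $\mathrm{Re}(w) = c$. An end with $\mathrm{Im}(w)\to\pm\infty$ translates to $\arg(z)\to(2j+1)\pi/N$ for some integer $j$, so every vertical leaf going to infinity asymptotes to one of the $N$ rays in the statement.

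\emph{Global part.} The Strebel hypothesis makes $\Gamma$ a tree in $\CC$ with $N$ infinite edges asymptotic to the horizontal directions $2\pi k/N$. Adding $\infty$ as a single vertex and applying $V - E + F = 2$ to the planar graph $\Gamma \cup \{\infty\} \subset S^2$ yields $F = N$, so $\CC\setminus\Gamma$ has exactly $N$ connected components. Each component is bounded at infinity by two consecutive infinite edges of $\Gamma$, so it lies in a wedge containing exactly one vertical angle, giving a unique label $\theta_C$ to each component $C$.

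Now let $\gamma$ be a saddle cut. Under the identification of \cref{thm:homeo}, the vertical leaf space of $\varphi$ is precisely the metric tree $\Gamma$, so $\gamma$ crosses $\Gamma$ transversally at a single interior point $p$ of some edge $e$, and the two half-leaves of $\gamma\setminus\{p\}$ lie on opposite sides of $e$. A second Euler characteristic count, in which removing $e$ from $\Gamma\cup\{\infty\}$ drops the face count from $N$ to $N-1$, shows that those two sides belong to distinct components $C_+$ and $C_-$ of $\CC\setminus\Gamma$. Each half-leaf stays in its component (it cannot re-cross $\Gamma$) and extends to infinity (there are no closed or recurrent vertical leaves of this Strebel $\varphi$ on $\mathbb{CP}^1$), so it must asymptote to the unique angle $\theta_{C_\pm}$ of its component; these are distinct because $C_+\neq C_-$.

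The main obstacle is justifying that a saddle cut crosses $\Gamma$ exactly once and that its half-leaves extend to infinity without recurrence; both follow from the identification of the vertical leaf space with $\Gamma$ afforded by \cref{thm:homeo}, but making this precise is the only step beyond routine planar topology.
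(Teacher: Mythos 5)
Your argument is correct and follows essentially the same route as the paper: the local normal form $z^{N-2}dz^2$ at infinity identifies the possible asymptotic directions, and the decomposition of $\CC\setminus\Gamma$ into $N$ pieces (your $N$ components are exactly the $N$ half-planes into which the horizontal foliation of a Strebel differential splits the plane) forces a regular vertical leaf to consist of two rays escaping to infinity in two distinct pieces. Your Euler-characteristic counts merely make explicit two facts the paper leaves implicit --- that there are exactly $N$ such pieces and that the two sides of the edge crossed by $\gamma$ lie in distinct ones (which uses that every finite vertex of $\Gamma$ has valence $\ge 3$, so the edge is not a bridge of $\Gamma\cup\{\infty\}$) --- while the step you flag as non-routine is exactly what the paper supplies by noting that each piece is isometric to an upper half-plane, in which every vertical leaf is a ray from the boundary to infinity.
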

\begin{proof}
    Note that for $|z|$ large enough, we always have $\varphi(z) \sim z^{k+i_1+\dots+i_k-2}dz^2$. The horizontal foliation of a Strebel differential $\varphi$ splits the surface into $(k+i_1+\dots+i_k)$ pieces, each isometric to the upper half-plane. Since $\gamma$ is a regular leaf of the vertical foliation, it must meet exactly two of those pieces, and in each of those under the isometry it maps to a vertical half-ray going to infinity; the result then follows from studying the vertical foliation of $z^{k+i_1+\dots+i_k-2}dz^2$.  
\end{proof}

\noindent Knowing the statement of \cref{prop:saddle}, we can say that the equivalence class $C = [\gamma]$ of a cut $\gamma$ is the unordered pair of indices $\{j_1,j_2\}$ to whose rays it is asymptotic.
\begin{definition}
    Given a collection of pairwise distinct equivalence classes $\mathbf{C} = \{C_1,\dots,C_r\}$, we say that a regularized Strebel differential is \emph{cloven} by $\mathbf{C}$ if it has saddle cuts in those equivalence classes.
\end{definition}
\noindent Simply speaking, $(\varphi,\lambda_1,\dots,\lambda_k)$ is cloven by $\mathbf{C}$ when it has vertical leaves going to the correct pair of regions, each of which intersects the critical graph of $\varphi$ at bivalent vertices. Therefore, for any $\mathbf{C}$ as above, the subspace
\[ \mathrm{ClovQ}_\mathbf{C} \subset Q^\mathrm{Str}_\mathrm{reg}(k;i_1,\dots,i_k) \]
of regularized Strebel differentials that are cloven by $\mathbf{C}$ is a union of cells, each of which labeled by a directed tree that has at least $r$ bivalent vertices. This union of cells is coclosed, in the sense that if $a$ is in $\mathrm{ClovQ}_\mathbf{C}$ and is on the boundary of $b$, then so is $b$. So we can write
\[ \mathrm{Clov}_\mathbf{C} \subset Z_{(k;i_1,\dots,i_k)} \]
for the corresponding union of cells in the assocoipahedron, which will be a cell subcomplex. We now give the main proposition of this section.
\begin{proposition}\label{prop:contractible}
    For any collection of pairwise distinct equivalence classes 
    \[ \mathbf{C} = \{C_1,\dots,C_r\}, \] 
    the space $\mathrm{Clov}_\mathbf{C}$ is contractible if and only if there are $r$ cuts $\gamma_1,\dots,\gamma_r$, one in each equivalence class, that are pairwise disjoint and divide the complex plane into regions such that each contains at least one of the $k$ output directions. In every other case, $\mathrm{Clov}_\mathbf{C}$ is empty.
\end{proposition}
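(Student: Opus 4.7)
The plan is to handle the two directions of the biconditional separately, and in the non-empty case to deduce contractibility from a product decomposition of $\mathrm{Clov}_\mathbf{C}$ into smaller assocoipahedra, each of which is already known to be contractible.

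For the ``only if'' direction, assume $\mathrm{Clov}_\mathbf{C}$ is non-empty and pick any $(\varphi,\lambda_1,\dots,\lambda_k) \in \mathrm{ClovQ}_\mathbf{C}$. Its saddle cuts $\gamma_1,\dots,\gamma_r$ are distinct regular leaves of the single vertical foliation $p_v(\varphi)$ and so are automatically pairwise disjoint simple curves in $\CC$, each asymptotic to its prescribed pair of rays by \cref{prop:saddle}. Hence they divide $\CC$ into open regions. I would then argue that each region must contain at least one cutoff point of $\Lambda$: otherwise the continuous function $\mathrm{dist}_\Lambda$ restricted to the closure of that region would have no local minima, contradicting the fact that every bounding cut is a saddle locus of $\mathrm{dist}_\Lambda$ (a genuine saddle configuration requires strictly lower values nearby, which in turn requires local minima to exist in each adjacent region).

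For the ``if'' direction, fix disjoint representatives $\gamma_1,\dots,\gamma_r$ of the classes in $\mathbf{C}$ satisfying the condition, and let $R_0,\dots,R_r$ denote the resulting closed regions. Each $R_j$ inherits a tuple $(k_j;\mathbf{i}_j)$ recording the original inputs and outputs contained in it, together with one additional ``half-edge output'' on each side of each bounding cut. The cells of $\mathrm{Clov}_\mathbf{C}$ are indexed by directed planar trees in the disk with bivalent vertices at the specified positions; cutting at these bivalent vertices gives a bijection with tuples of directed planar trees, one in each $R_j$. I claim this assignment extends to a homeomorphism of cell complexes
\[ \mathrm{Clov}_\mathbf{C} \;\cong\; \prod_{j=0}^{r} Z_{(k_j;\mathbf{i}_j)}, \]
under which the cellular chain complex of the left-hand side becomes the tensor product of those on the right. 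Each factor $Z_{(k_j;\mathbf{i}_j)}$ is contractible as an assocoipahedron, hence so is $\mathrm{Clov}_\mathbf{C}$. Non-emptiness follows immediately from the existence of the product factors.

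The main obstacle I anticipate is setting up the product decomposition cleanly: one needs to verify that the combinatorial factorization of trees through the cuts respects the full cell structure, including the boundary strata where several edge lengths collapse or where sub-trees degenerate in the smaller assocoipahedra. Bookkeeping of the extra ``cut outputs'' on each $R_j$ and how they interact with the planar cyclic structure coming from the $C_{k_j}$-actions on the smaller dioperads will require care, but once the combinatorial bijection at the level of cells is checked, the homeomorphism and the resulting contractibility follow directly.
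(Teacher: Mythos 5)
Your argument takes a genuinely different route from the paper. The paper does not decompose $\mathrm{Clov}_\mathbf{C}$ into smaller assocoipahedra; instead it works upstairs with the open union of cells $\mathrm{ClovQ}_\mathbf{C} \subset Q^{\mathrm{Str}}_{\mathrm{reg}}$, shows that its projection to $Q^{\mathrm{Str}}$ is an open ball (a product of metric-tree spaces obtained by cutting along the $\gamma_a$), and that the fibers over this ball are open convex polyhedra in the simplex $\Delta^{k-1}$ of output lengths, cut out by inequalities of the form $\min_{i\in I}(\lambda_i+d_i) < \min_{j\in J}(\lambda_j+d_j)$; contractibility of the dual subcomplex $\mathrm{Clov}_\mathbf{C}$ then follows. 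Your treatment of the ``only if'' direction is fine and in fact supplies a detail the paper elides: that a region with no cutoff point of $\Lambda$ would force $\mathrm{dist}_\Lambda$ to attain a minimum on $\bar R\cap\Gamma$ either at an interior point not in $\Lambda$ (impossible) or at a bounding saddle (contradicting that saddles are local maxima).

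The product decomposition, however, is wrong as you have set it up, and the error is not mere bookkeeping. The saddle point is a local \emph{maximum} of $\mathrm{dist}_\Lambda$, so both half-edges at a cut are directed \emph{away} from it, i.e.\ \emph{into} the two adjacent regions; from the point of view of the subtree in $R_j$ these are incoming external legs, and the interior local maxima of $\mathrm{dist}_\Lambda|_{R_j}$ are governed by the \emph{original} outputs lying in $R_j$ alone. So the cut legs must be counted as extra \emph{inputs} of the factor, not as ``half-edge outputs.'' With your convention the factors are the wrong polytopes: for $(k;i_1,i_2)=(2;2,2)$ and a single cut separating the two outputs, the subcomplex $\mathrm{Clov}_C$ has $9$ cells forming a square, which is $Z_{(1;3)}\times Z_{(1;3)}=K_3\times K_3$ (cut legs as inputs), whereas $Z_{(2;2,0)}\times Z_{(2;0,2)}$ (cut legs as outputs) has the right dimension but a different, and smaller, cell structure, since the outward-directed cut legs would create spurious bivalent local-maximum vertices. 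Even after this correction, the claimed cellular isomorphism $\mathrm{Clov}_\mathbf{C}\cong\prod_j Z_{(k_j;\mathbf{i}_j)}$ is the entire content of the contractibility statement and still needs an actual argument: you must check that the directed structure on each piece depends only on that region's own cutoff set (this uses precisely the balancing condition $D_c = D_c^{R}$ at each saddle), that every tuple of realizable directed trees glues back to a realizable cloven tree for suitable cut-edge lengths, and that contractions of the half-edges adjacent to a saddle vertex leave $\mathrm{Clov}_\mathbf{C}$ on both sides of the correspondence. If you carry that out, you obtain a stronger statement than the paper's (a polytopal product structure rather than bare contractibility), but as written the proposal asserts the key homeomorphism with an incorrect identification of the factors.
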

\begin{proof}
    Let us start by proving the contrapositive of the last statement. If $\mathrm{Clov}_\mathbf{C}$ is non-empty, $\mathrm{ClovQ}_\mathbf{C}$ must contain at least one point; taking the $r$ saddle cuts gives us the desired representatives $\gamma_1,\dots,\gamma_r$.

    For the converse, let $\gamma_1,\dots,\gamma_r$ satisfy the condition in the statement. We can easily construct a regularized metric tree whose quadratic differential is cloven by $\mathbf{C}$, say, by picking a single internal vertex for each region, length $2$ for all internal edges and length $1$ for all of the outgoing $k$ leaves. This proves that $\mathrm{ClovQ}_\mathbf{C}$ is not empty.

    Let us now prove contractibility. We first note that the image of $\mathrm{ClovQ}_\mathbf{C}$ under the projection
    \[ \pi \colon Q^\mathrm{Str}_\mathrm{reg}(k;i_1,\dots,i_k) \twoheadrightarrow Q^\mathrm{Str}(k + i_1 + \dots + i_k + 2) \]
    is an open ball, being a product of the metric tree spaces $\mathrm{MetTr}(n_a)$ for some numbers $\{n_a\}$ for $a \in \{1,\dots,r+1\}$, satisfying
    \[ \sum_a n_a = k + i_1 + \dots + i_k + 2r, \]
    where each $n_a$ is the number of leaves of each subtree, once we cleave along the saddle cuts. So $\mathrm{ClovQ}_\mathbf{C}$ is a subspace of
    \[ \pi(\mathrm{ClovQ}_\mathbf{C}) \times (\RR^k)/\RR_{>0} \cong \pi(\mathrm{ClovQ}_\mathbf{C}) \times \Delta^{k-1} \]
    that is, the (trivial) bundle over an open ball with fiber the $(k-1)$-simplex. In each fiber, this subspace is cut out by a certain finite system of linear equations, each one of the form
    \[ \min_{i \in I} (\lambda_i+d_i) < \min_{j \in J} (\lambda_j + d_j) \]
    where $I$ and $J$ are subsets of the outgoing leaf indices $\{1,\dots,k\}$ corresponding to two neighboring regions among the $(k+1)$ regions cut out by the critical graph, and $d_i$ are positive reals, fixed by the metric on the internal edges. This is an intersection of open convex polyhedral subsets of $\Delta^{k-1}$, and therefore an open convex polyhedron. Moreover, this polyhedron in the fiber varies continuously along the base $\pi(\mathrm{ClovQ}_\mathbf{C})$, which proves that $\mathrm{ClovQ}_\mathbf{C}$ is homeomorphic to an open ball, and therefore the corresponding subcomplex $\mathrm{Clov}_\mathbf{C} \subset  Z_{(k;i_1,\dots,i_k)}$ is contractible.
\end{proof}

\noindent The statement above, though rather simple, allows us to characterize the homology type of the subcomplex of the assocoipahedron of all cloven regularized Strebel differentials.
\begin{theorem}
    The subcomplex
    \[ \mathrm{Clov}_{(k;i_1,\dots,i_k)} = \bigcup_\mathbf{C} \mathrm{Clov}_\mathbf{C} \subset Z_{(k;i_1,\dots,i_k)}, \]
    corresponding to any quadratic differentials that are cloven for some saddle cut, has the homology type of a bouquet of some number of $(k-2)$-dimensional spheres.
\end{theorem}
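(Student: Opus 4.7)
The plan is to apply the nerve lemma to the cover of $\mathrm{Clov}_{(k;i_1,\dots,i_k)}$ by the family $\{\mathrm{Clov}_\mathbf{C}\}_\mathbf{C}$. By \cref{prop:contractible} each non-empty $\mathrm{Clov}_\mathbf{C}$ is contractible, and the intersection $\mathrm{Clov}_\mathbf{C} \cap \mathrm{Clov}_{\mathbf{C}'}$ equals $\mathrm{Clov}_{\mathbf{C}\cup\mathbf{C}'}$, which is again of the same form and hence either empty or contractible. It then suffices to work with the subcover indexed by singletons $\{C\}$ satisfying $\mathrm{Clov}_C \neq \emptyset$, since any non-empty $\mathrm{Clov}_\mathbf{C}$ is contained in each $\mathrm{Clov}_C$ with $C \in \mathbf{C}$. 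The nerve lemma then yields a homotopy equivalence
\[ \mathrm{Clov}_{(k;i_1,\dots,i_k)} \simeq \mathcal{N}, \]
where $\mathcal{N}$ is the abstract simplicial complex whose $r$-simplices are $(r+1)$-element sets of saddle cut classes admitting pairwise disjoint representatives that partition $\CC$ into regions each containing at least one output direction.

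Next I would describe $\mathcal{N}$ combinatorially. Writing $N = k + i_1 + \dots + i_k$, a vertex of $\mathcal{N}$ is a chord between two of the $N$ sector positions such that each of the two arcs it cuts off in the cyclic order contains at least one of the $k$ outputs. Higher simplices correspond to pairwise non-crossing sets of such chords whose induced dissection leaves an output in every region. Since each additional chord subdivides one region into two and there are only $k$ outputs to distribute, any admissible dissection involves at most $k-1$ chords, so $\dim \mathcal{N} \leq k-2$. Moreover, any admissible partial dissection can be refined to one with exactly $k$ regions by iteratively inserting chords inside regions containing more than one output, so $\mathcal{N}$ is pure of dimension $k-2$.

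The core step is to show that this pure $(k-2)$-dimensional complex has the homotopy type of a wedge of $(k-2)$-spheres. My approach would be to construct an explicit shelling of $\mathcal{N}$. A natural candidate order on facets arises from the identification of a maximal admissible dissection with a planar rooted tree having $k$ leaves (one per output, with the $i_a$ inputs distributed cyclically among the sides); one can then try to shell the facets in, say, lexicographic order on their dual planar tree. Purity together with shellability implies the homotopy type is a wedge of top-dimensional spheres by the standard implication, yielding the claim. As a fallback, one can instead exhibit a discrete Morse matching on $\mathcal{N}$ whose critical cells lie only in dimension $k-2$, again giving the same conclusion.

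The principal obstacle is this combinatorial step: designing a shelling (or Morse matching) that works uniformly in the tuple $(i_1,\dots,i_k)$, and verifying its defining property, will require a careful analysis of how admissible dissections are built up—especially when blocks of inputs sit between consecutive outputs, which complicates the recursive structure. A secondary point to handle carefully when identifying $\mathcal{N}$ with the combinatorial complex above is that the geometric condition of existence of pairwise disjoint representatives of saddle cuts agrees exactly with the cyclic non-crossing condition on chords; this follows from the horizontal-strip decomposition exploited in \cref{prop:saddle}, but merits an explicit check.
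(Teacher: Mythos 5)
Your reduction to the nerve $\mathcal{N}$ of the cover by the singleton pieces $\mathrm{Clov}_{\{C\}}$ is sound and coincides with half of the paper's argument: \cref{prop:contractible} gives contractibility of all non-empty intersections, and the bound $\dim \mathcal{N} \le k-2$ (no admissible dissection with $\ge k$ chords) is exactly how the paper kills homology above degree $k-2$. The gap is in the other half. You propose to establish that $\mathcal{N}$ is a wedge of top-dimensional spheres by exhibiting a shelling or a discrete Morse matching, but you do not construct either one; you explicitly flag this as the ``principal obstacle,'' so the core of the proof is missing. This is not a minor bookkeeping omission: the combinatorics of these constrained non-crossing dissections is genuinely delicate (the paper remarks that even the rank of $H_{k-2}$ is unknown in general except for $k=2$), shellability of $\mathcal{N}$ is not evident and is nowhere verified, and without it your argument proves nothing about degrees $1,\dots,k-3$.

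The paper obtains the low-degree vanishing by an entirely different and much cheaper mechanism, which you should note: since every directed tree indexing a cell of the assocoipahedron in chain degree $j \le k-2$ necessarily has a bivalent vertex, the inclusion $C_j(\mathrm{Clov}_{(k;i_1,\dots,i_k)},\ZZ) \subset C_j(Z_{(k;i_1,\dots,i_k)},\ZZ)$ is an \emph{isomorphism} in those degrees, so $H_j(\mathrm{Clov}_{(k;i_1,\dots,i_k)}) \cong H_j(Z_{(k;i_1,\dots,i_k)})$ for $j \le k-3$, and the latter vanishes (for $j>0$) because the assocoipahedron is contractible. Combined with the nerve bound in high degrees, this concentrates the reduced homology in degree $k-2$, where it is automatically free as the top homology of a $(k-2)$-dimensional complex; that is all the statement claims (homology type, not homotopy type). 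If you want to salvage your route, you must either actually produce the shelling/matching uniformly in $(i_1,\dots,i_k)$ --- which would be a strictly stronger result --- or replace it by the cellular comparison with $Z_{(k;i_1,\dots,i_k)}$ as above.
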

\begin{proof}
    We first note that the inclusion of cellular chain complexes
    \[ C_j(\mathrm{Clov}_{(k;i_1,\dots,i_k)},\ZZ) \subset C_j(Z_{(k;i_1,\dots,i_k)},\ZZ) \]
    is an isomorphism in degrees $0 \le j \le k-2$, since every directed graph with no bivalent vertices has degree at least $(k-1)$. This implies that $H_0(\mathrm{Clov},\ZZ) = H_0(Z_{(k;i_1,\dots,i_k)},\ZZ) = \ZZ$ and $H_j(\mathrm{Clov}_{(k;i_1,\dots,i_k)},\ZZ) =0$ when $j = 1,\dots,k-3$.

    However, the space $\mathrm{Clov}_{(k;i_1,\dots,i_k)}$ is the union of all the spaces $\mathrm{Clov}_{\{C\}}$ for a single equivalence class of saddle cut. These are all contractible spaces, and all their intersections are also either empty or contractible by \cref{prop:contractible}. Therefore, the homology of $C_*(\mathrm{Clov}_{(k;i_1,\dots,i_k)},\ZZ)$ is isomorphic to the homology of a simplicial complex, whose $(r-1)$-simplices are exactly the non-empty intersections of $r$ spaces of the form $\mathrm{Clov}_{\{C\}}$. But there are no such simplices with dimension $\ge (k-1)$, since if $r \ge k$, not all of the $(k+1)$ regions divided by the saddle cuts can have at least one output, and the space $\mathrm{Clov}_{\{C_1,\dots,C_r\}}$ is empty. 
    
    In conclusion, that simplicial complex has dimension $\le (k-2)$ and so the homology of $\mathrm{Clov}_{(k;i_1,\dots,i_k)}$ is supported in degrees $0, \dots, k-2$. Since we established vanishing in degrees $1,\dots,k-3$, the result follows.
\end{proof}

\noindent At the present moment, we do not know of a nice combinatorial formula for the rank of $H_{k-2}(\mathrm{Clov}_{(k;i_1,\dots,i_k)},\ZZ)$, except in the case where $k=2$.
\begin{proposition}\label{prop:rank}
    When $k=2$, we have
    \[ \mathrm{rk}\,H_{0}(\mathrm{Clov}_{(k;i_1,\dots,i_k)},\ZZ) = \prod_{1 \le a < b \le k} (i_a+1)(i_b+1). \] 
\end{proposition}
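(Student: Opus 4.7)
The plan is to reduce the computation to a direct combinatorial count, exploiting the fact that when $k=2$ the nerve presentation of $\mathrm{Clov}_{(k;i_1,\dots,i_k)}$ used in the proof of the preceding theorem collapses to a $0$-dimensional simplicial complex. By \cref{prop:contractible}, $\mathrm{Clov}_\mathbf{C}$ is non-empty only if the collection $\mathbf{C}$ of cuts divides the plane into regions each containing at least one of the $k$ outputs. When $k=2$, taking $r\ge 2$ pairwise distinct saddle cuts would produce at least three regions but only two outputs are available, so $\mathrm{Clov}_\mathbf{C}$ is empty as soon as $|\mathbf{C}|\ge 2$. Hence the nerve has no simplices of dimension $\ge 1$, and
\[ \mathrm{rk}\, H_0(\mathrm{Clov}_{(2;i_1,i_2)},\ZZ) = \#\{\, C : \mathrm{Clov}_{\{C\}} \ne \varnothing \,\}. \]

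Next, I would apply the $r=1$ case of \cref{prop:contractible} to reinterpret this count: $\mathrm{Clov}_{\{C\}}$ is non-empty precisely when a representative cut $\gamma$ separates the two outputs into distinct connected components of $\CC\setminus\gamma$, that is, when the two asymptotic rays of $\gamma$ lie in opposite arcs of the circle at infinity as one walks from one output to the other.

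The remaining step is the enumeration of such cuts. Label the $N=2+i_1+i_2$ critical leaves cyclically, placing the two outputs at positions $0$ and $i_1+1$ and the two blocks of $i_1$ and $i_2$ inputs in between. A saddle cut class is an unordered pair of the $N$ asymptotic gaps lying between consecutive critical leaves. The arc running from output $0$ to output $i_1+1$ through the first block of inputs contains $i_1+1$ such gaps (one immediately after output $0$, one immediately before output $i_1+1$, and one between each pair of consecutive inputs), and the other arc analogously contains $i_2+1$. A cut separates the outputs if and only if it chooses one gap in each arc, giving exactly $(i_1+1)(i_2+1)$ cuts. This yields the claimed rank. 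There is no serious obstacle here; the only point to be careful about is the off-by-one between the number of inputs in an arc and the number of gaps in that arc.
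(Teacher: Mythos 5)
Your proposal is correct and follows essentially the same route as the paper: the paper's one-line proof simply asserts that the rank is the number of cut classes keeping an output on each side, relying implicitly on the nerve argument of the preceding theorem, and you have filled in exactly those implicit steps (the vanishing of all higher nerve simplices when $k=2$ and the $(i_1+1)(i_2+1)$ count of separating gap pairs). No discrepancy to report.
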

\begin{proof}
    This is the number of equivalence classes of cuts that keep at least one of the $k$ outputs to each side.
\end{proof}
\begin{figure}
    \centering
    \includegraphics[width=0.5\linewidth]{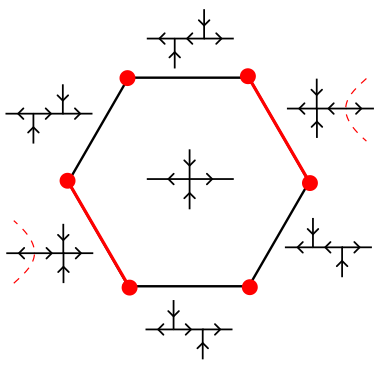}
    \caption{The subcomplex $\mathrm{Clov}_{(2;1,1)}$ in red, inside of the assocoipahedron $Z_{(2;1,1)}$, which is a hexagon. Each cell is labeled by its corresponding directed tree, but we omit the labels of the 0-cells. The dashed lines indicate the saddle cut that cleaves the regularized differentials in that cell.}
    \label{fig:clovenSubcomplex}
\end{figure}

\subsection{Identification of bar complexes}

The following proposition has the same proof as in \cite[Prop.1.15]{emprin2025properadic}, with the difference that the permutations $\sigma$ and $\tau$ of inputs and outputs are constrained to be cyclic in the notion of \cref{sec:planar}.
\begin{proposition}
    There is an embedding of Koszul dual planar dioperads
    \[ (\scrY^{(n)}_{pl})^! \hookrightarrow (\scrV^{(n)}_{pl})^! \]
    whose image is exactly the planar subdioperad spanned by operations of arity different from $(2;0)$.
\end{proposition}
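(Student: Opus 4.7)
The plan is to mirror the proof of \cite[Prop.~1.15]{emprin2025properadic}, working throughout with composites that respect only cyclic (rather than full symmetric) permutations of inputs and outputs, as required by \cref{sec:planar}. First, I would present both Koszul duals by explicit planar quadratic data: $(\scrV^{(n)}_{pl})^!$ is generated by $\mu^!$ of arity $(1;2)$ and $\nu^!$ of arity $(2;0,0)$ subject to the planar orthogonal of the relations of $\scrV^{(n)}_{pl}$ from \cref{sec:koszulity}, while $(\scrY^{(n)}_{pl})^!$ is generated by $\mu^!$, $\alpha^!$ of arity $(2;1,0)$, and $\beta^!$ of arity $(3;0,0,0)$ subject to the planar orthogonals of the relations displayed in the introduction; in each case the degrees of the generators are shifted by the usual Koszul-duality rule.

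Next, I would define the candidate map $(\scrY^{(n)}_{pl})^! \to (\scrV^{(n)}_{pl})^!$ on generators by $\mu^! \mapsto \mu^!$, sending $\alpha^!$ to the planar composite in which an output of $\nu^!$ is plugged into an input of $\mu^!$, and $\beta^!$ to the cyclically symmetric planar composite of one $\mu^!$ with two copies of $\nu^!$; the precise signs are fixed by the degree shifts. Verifying that this extends to a well-defined morphism of planar dioperads reduces to checking that each defining quadratic relation of $(\scrY^{(n)}_{pl})^!$ is satisfied by the corresponding composite in $(\scrV^{(n)}_{pl})^!$. This is a bookkeeping-heavy but mechanical expansion, and crucially it only invokes cyclic rotations, since the relations of both planar dioperads are themselves planar.

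The main obstacle is the description of the image. For this I would establish a planar normal form: every composite of $\mu^!$ and $\nu^!$ in an arity distinct from $(2;0)$ can be rewritten, using the planar orthogonal relations of $\scrV^{(n)}_{pl}$, as a composite in which each occurrence of $\nu^!$ sits at the top of a subtree whose root is a $\mu^!$ (giving an $\alpha^!$-shape) or is part of a three-vertex $\beta^!$-shape. Such composites are by construction in the image of tree composites from $(\scrY^{(n)}_{pl})^!$. The key difficulty relative to \cite[Prop.~1.15]{emprin2025properadic} is that every rewriting must be implemented using only cyclic rotations of inputs and outputs; once this is in hand, dimension counting per planar arity $(k;i_1,\dots,i_k)$ yields both injectivity and the claimed identification of the image.
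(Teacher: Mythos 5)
Your proposal follows essentially the same route as the paper: the paper's proof is simply the observation that the argument of \cite[Prop.~1.15]{emprin2025properadic} goes through verbatim once the permutations of inputs and outputs are constrained to be cyclic, and your outline (explicit generators $\mu^!,\alpha^!,\beta^!$ mapping to $\mu^!$ and the $\nu^!$--$\mu^!$ composites, relation-checking, then a normal form and dimension count to identify the image) is precisely that argument spelled out. The one point worth keeping in mind, which the paper emphasizes immediately after the statement, is that this map does not come from a map of quadratic data and shifts the weight grading, so the ``relation-checking'' happens between inhomogeneous weights --- but that is consistent with what you describe.
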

\noindent The embedding above \emph{does not} come from a map of quadratic data, and does not respect the weight grading. It still induces a map of bar complexes, but with a shift in syzygy degree.
\begin{corollary}
    For any tuple $(k;i_1,\dots,i_k)$ with $k \ge 2$ and $i_a \ge 0$, there is an injective map of cochain complexes
    \[ (\mathbf{B}^*_{pl} (\scrY^{(n)}_{pl})^!)_{(k;i_1,\dots,i_k)} \hookrightarrow (\mathbf{B}^{*+k-1}_{pl} (\scrV^{(n)}_{pl})^!)_{(k;i_1,\dots,i_k)} \]
    (note the shift of $k-1$) whose image is spanned precisely by the diagrams without bivalent vertices.
\end{corollary}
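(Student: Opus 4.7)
The plan is to apply the bar construction functorially to the embedding $\iota\colon(\scrY^{(n)}_{pl})^! \hookrightarrow (\scrV^{(n)}_{pl})^!$ furnished by the preceding proposition, to identify its image directly as diagrams without bivalent vertices, and to account for the syzygy shift by a weight-counting argument.

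First, I will argue that $\iota$ induces a morphism of cochain complexes
\[ \mathbf{B}_{pl}(\iota)\colon \mathbf{B}_{pl}((\scrY^{(n)}_{pl})^!) \to \mathbf{B}_{pl}((\scrV^{(n)}_{pl})^!) \]
acting on a generating directed planar tree by keeping the tree unchanged and transporting each vertex label via $\iota$. Compatibility with the bar differential (edge contraction using dioperadic composition) is automatic from $\iota$ being a morphism of dioperads, arity-wise injectivity of $\iota$ gives injectivity of $\mathbf{B}_{pl}(\iota)$, and by the preceding proposition a generating tree lies in its image exactly when each vertex has arity different from $(2;0)$, that is, when there is no bivalent vertex with two outgoing and zero incoming edges.

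For the syzygy shift, I will use that for a quadratic planar dioperad the syzygy of a tree with $r$ vertices and total weight $W$ equals $s = W - r$, together with the key observation that every weight-$1$ generator of $\scrY^{(n)}_{pl}$ (namely $\mu, \alpha, \beta$ of arities $(1;2), (2;1), (3;0)$) satisfies $k + I = 3$, where $(k;I)$ is the outgoing/incoming arity. An induction on weight then gives $W_\scrY = k + I - 2$ for any element of $(\scrY^{(n)}_{pl})^!$ of arity $(k;I)$ with $I = i_1 + \dots + i_k$, while the analogous count for $(\scrV^{(n)}_{pl})^!$ (whose generator $\mu$ has $k+I=3$ and $\nu$ has $k+I=2$) forces any arity-$(k;I)$ element to be a composition of $k-1$ copies of $\nu$ with $I+k-2$ copies of $\mu$, yielding $W_\scrV = I + 2k - 3$. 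Since the number of vertices is unchanged under $\iota$,
\[ s_\scrV - s_\scrY = W_\scrV - W_\scrY = (I + 2k - 3) - (k + I - 2) = k - 1, \]
which is exactly the claimed shift.

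The main obstacle is the uniform weight computation on both sides; once one observes that the weight-$1$ generators of each dioperad have a prescribed (or very constrained) sum of outgoing and incoming arities, the induction runs smoothly and the shift formula collapses to the arithmetic above. With this in place, the identification of the image of $\mathbf{B}_{pl}(\iota)$ with the diagrams without bivalent vertices, and the degree shift of $k-1$, follow directly from the preceding proposition and the functoriality of $\mathbf{B}_{pl}$.
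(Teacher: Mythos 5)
Your proposal is correct and follows the route the paper intends: the corollary is stated there without an explicit proof, as a direct consequence of the functoriality of $\mathbf{B}_{pl}$ applied to the embedding $(\scrY^{(n)}_{pl})^! \hookrightarrow (\scrV^{(n)}_{pl})^!$, whose image misses exactly the arity-$(2;0)$ components (the bivalent vertices). Your weight-counting argument correctly supplies the one detail the paper leaves implicit, namely that arity determines weight on both Koszul duals ($W_{\scrY}=k+I-2$ versus $W_{\scrV}=I+2k-3$), so the syzygy shift is the well-defined constant $k-1$.
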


\noindent Let us now define
\[ R^* = \coker\left(\mathbf{B}^{*-k+1}_{pl} (\scrY^{(n)}_{pl})^!)_{(k;i_1,\dots,i_k)} \hookrightarrow (\mathbf{B}^{*}_{pl} (\scrV^{(n)}_{pl})^!)_{(k;i_1,\dots,i_k)}\right)\]
to be the cokernel chain complex, spanned by diagrams with bivalent vertices. This immediately implies the following lemma.
\begin{lemma}
    Under the identification between $(\mathbf{B}^{*}_{pl} (\scrV^{(n)}_{pl})^!)_{(k;i_1,\dots,i_k)}$ and the cellular cochain complex of $Z_{(k;i_1,\dots,i_k)}$, the quotient map 
    \[ (\mathbf{B}^{*}_{pl} (\scrV^{(n)}_{pl})^!)_{(k;i_1,\dots,i_k)} \twoheadrightarrow R^* \] 
    identifies $R^*$ with the cellular cochain complex of the subcomplex $\mathrm{Clov} \subset Z_{(k;i_1,\dots,i_k)}$.
\end{lemma}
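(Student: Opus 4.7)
The plan is to read off both sides from the identifications already in place. The preceding corollary states that the image of the shifted embedding
\[
(\mathbf{B}^{*-k+1}_{pl}(\scrY^{(n)}_{pl})^!)_{(k;i_1,\dots,i_k)} \hookrightarrow (\mathbf{B}^{*}_{pl}(\scrV^{(n)}_{pl})^!)_{(k;i_1,\dots,i_k)}
\]
is the sub-chain-complex spanned precisely by diagrams without bivalent vertices. Under the identification from the previous section between $(\mathbf{B}^{*}_{pl}(\scrV^{(n)}_{pl})^!)_{(k;i_1,\dots,i_k)}$ and the cellular (co)chain complex of $Z_{(k;i_1,\dots,i_k)}$, this subcomplex corresponds to the cells of $Z_{(k;i_1,\dots,i_k)}$ whose labelling graphs have no bivalent vertex. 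By the definition of $\mathrm{Clov}$, these are exactly the cells of $Z_{(k;i_1,\dots,i_k)}$ not lying in $\mathrm{Clov}$, while the cells of $\mathrm{Clov}$ are exactly those labelled by diagrams with at least one bivalent vertex.

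Passing to the cokernel, $R^*$ has as basis exactly the cells of $\mathrm{Clov}$, so as a graded module it agrees with the cellular cochain complex of the subcomplex $\mathrm{Clov} \subset Z_{(k;i_1,\dots,i_k)}$. To verify the differentials match, the plan is to observe that the differential on $R^*$ is induced from the bar differential on $(\mathbf{B}^{*}_{pl}(\scrV^{(n)}_{pl})^!)_{(k;i_1,\dots,i_k)}$, which under our identification is the cellular (co)boundary on $Z_{(k;i_1,\dots,i_k)}$ given by edge contractions with appropriate signs. The induced differential on $R^*$ is this (co)boundary modulo cells outside $\mathrm{Clov}$, and this coincides with the restriction of the cellular (co)boundary to cells of $\mathrm{Clov}$ precisely because $\mathrm{Clov}$ is a subcomplex: for any cell of $\mathrm{Clov}$ the (co)boundary already lies in $\mathrm{Clov}$, so there is no component that gets killed in the quotient.

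The main subtle point is therefore the subcomplex property of $\mathrm{Clov}$, which has already been established via the coclosedness of $\mathrm{ClovQ}_\mathbf{C} \subset Q^\mathrm{Str}_\mathrm{reg}$; combinatorially, this amounts to the fact that the specified bivalent structure cannot be destroyed by the operation on graphs corresponding to the cellular differential on $Z_{(k;i_1,\dots,i_k)}$. Once this is in hand, the remaining work is simply a term-by-term comparison of the sign conventions, done in the same spirit as the analogous comparison for $\scrV^{(n)}$ in the preceding section, and the lemma follows.
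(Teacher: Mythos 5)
Your proposal is correct and follows exactly the route the paper intends: the paper offers no separate argument for this lemma, treating it as immediate from the definition of $R^*$ together with the observation that the cells of $\mathrm{Clov}$ are precisely those labelled by trees containing a bivalent vertex, so that $R^*$ is the quotient of $C^*(Z_{(k;i_1,\dots,i_k)})$ by the annihilator of $C_*(\mathrm{Clov})$, i.e.\ the cellular cochain complex of the subcomplex. One small correction to your last step: it is not true that the coboundary in $C^*(Z_{(k;i_1,\dots,i_k)})$ of a cell of $\mathrm{Clov}$ already lies in the span of $\mathrm{Clov}$-cells (contracting an edge at a bivalent vertex can destroy it), and components outside $\mathrm{Clov}$ \emph{do} get killed in the quotient --- which is exactly what makes the induced differential on $R^*$ agree with the cochain differential of the subcomplex; the structural fact you actually need is only that the kernel (the span of cells without bivalent vertices) is preserved by the differential, equivalently that $\mathrm{Clov}$ is closed under the cellular boundary, which the paper has already established via the coclosedness of $\mathrm{ClovQ}_{\mathbf{C}}$ in the dual cell structure.
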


\begin{proposition}\label{prop:barcomplexKoszul}
    The complex $(\mathbf{B}^*_{pl} (\scrY^{(n)}_{pl})^!)_{(k;i_1,\dots,i_k)}$ has cohomology concentrated in degree zero.
\end{proposition}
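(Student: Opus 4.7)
The plan is to feed the topological information from the preceding theorem (that $\mathrm{Clov}_{(k;i_1,\dots,i_k)}$ has the homology type of a wedge of $(k-2)$-spheres) into a long exact sequence in cohomology. Combining the Corollary and Lemma immediately preceding this Proposition, I obtain a short exact sequence of cochain complexes
\begin{equation*}
0 \to (\mathbf{B}^{*-k+1}_{pl} (\scrY^{(n)}_{pl})^!)_{(k;i_1,\dots,i_k)} \to (\mathbf{B}^{*}_{pl} (\scrV^{(n)}_{pl})^!)_{(k;i_1,\dots,i_k)} \to R^* \to 0,
\end{equation*}
where $R^*$ is identified with the cellular cochain complex of $\mathrm{Clov}_{(k;i_1,\dots,i_k)}$.

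Next I would invoke two cohomological inputs. First, by \cref{prop:KoszulDiop} together with the Koszulity of $\scrV^{(n)}$ established in \cref{sec:koszulity}, the middle term has cohomology concentrated in degree zero. Second, the preceding theorem combined with the universal coefficient theorem (using that the homology of a wedge of spheres is free) gives $H^m(R) = 0$ for $m \notin \{0, k-2\}$. Taking the long exact sequence attached to the short exact sequence above then yields, for every $j \ge 2$, an isomorphism $H^j(\mathbf{B}^{*-k+1}_{pl}(\scrY^{(n)}_{pl})^!)_{(k;i_1,\dots,i_k)} \cong H^{j-1}(R)$, which vanishes as soon as $j-1 > k-2$, i.e., for all $j \ge k$. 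Reindexing by the syzygy shift of $k-1$, this is precisely the statement that $H^i(\mathbf{B}^{*}_{pl}(\scrY^{(n)}_{pl})^!)_{(k;i_1,\dots,i_k)} = 0$ for $i \ge 1$.

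The arity $k=1$ is not covered by the geometric setup, but in that case $(\scrY^{(n)}_{pl})^!$ reduces to the planar Koszul dual of the associative operad, whose bar complex is classically known to have cohomology concentrated in degree zero, so this case can be disposed of separately. The main subtlety of the argument is that all grading conventions need to mesh: the assocoipahedron carries a homological cellular grading coming from cell dimension, the bar complex carries a cohomological syzygy grading, and these two are matched only after a top-degree reversal together with the syzygy shift by $k-1$ coming from the embedding of the preceding Corollary. Once this bookkeeping is pinned down, all the substantive geometric content has already been extracted in the preceding subsection on cloven quadratic differentials, and the proof itself is a short exercise in homological algebra.
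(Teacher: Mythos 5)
Your proposal is correct and follows essentially the same route as the paper's proof: the short exact sequence $Y^{*-k+1} \hookrightarrow Z^* \twoheadrightarrow R^*$, the acyclicity of $Z^*$ from contractibility of the assocoipahedron, the identification of $H^*(R)$ with the cohomology of $\mathrm{Clov}$ concentrated in degrees $0$ and $k-2$, and the resulting long exact sequence. Your explicit handling of the degree bookkeeping and of the arity $k=1$ case (where only $\mu$ can appear and one falls back on Koszulity of the associative operad) is more detailed than, but entirely consistent with, the argument in the text.
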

\begin{proof}
    For ease of notation, let us write
    \[ Y^* = (\mathbf{B}^*_{pl} (\mathscr{Y}^{(n)}_{pl})^!)_{(k;i_1,\dots,i_k)}, \quad Z^* = (\mathbf{B}^*_{pl} (\mathscr{V}^{(n)}_{pl})^!)_{(k;i_1,\dots,i_k)} \cong C^*(Z_{(k;i_1,\dots,i_k)},\ZZ) \]
    By definition, we have an exact sequence
    \[ Y^{*-k+1} \hookrightarrow Z^* \twoheadrightarrow R^* \]
    and the result then follows from the induced long exact sequence in cohomology, together with the vanishing results above, and contractibility of $Z^*$.
\end{proof}
\noindent We do not know how to compute the rank of $H^0(\mathbf{B}^{*}_{pl} (\scrY^{(n)}_{pl})^!)_{(k;i_1,\dots,i_k)})$ in general, except in the case $k=2$, as a consequence of \cref{prop:rank}.
\begin{corollary}
    When $k=2$, we have
    \[ \mathrm{rk}\,H^0(\mathbf{B}^{*}_{pl} (\scrY^{(n)}_{pl})^!)_{(k;i_1,\dots,i_k)} = \left(\prod_{1 \le a < b \le k} (i_a+1)(i_b+1)\right) - 1.\] 
\end{corollary}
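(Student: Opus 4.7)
The strategy is to apply the long exact sequence in cohomology induced by the short exact sequence of cochain complexes
\[ 0 \to Y^{*-k+1} \to Z^* \to R^* \to 0 \]
appearing implicitly in the definition of $R^*$, using the identifications of the three complexes as (shifted) cellular cochain complexes of the assocoipahedron $Z_{(k;i_1,\dots,i_k)}$, the subcomplex $\mathrm{Clov}_{(k;i_1,\dots,i_k)}$, and the bar complex of $(\scrY^{(n)}_{pl})^!$.

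First, I would recall that $Z^*$ is the cellular cochain complex of the contractible assocoipahedron, so $H^0(Z) = \ZZ$ and $H^j(Z) = 0$ for $j \neq 0$. Next, in the special case $k=2$, the subcomplex $\mathrm{Clov}$ has the homology type of a bouquet of $(k-2)$-spheres, i.e. a bouquet of $0$-spheres, which is a discrete set. By \cref{prop:rank}, this discrete set has exactly $(i_1+1)(i_2+1)$ points, so $H^0(R) = \ZZ^{(i_1+1)(i_2+1)}$ and $H^j(R) = 0$ for all $j \geq 1$.

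Finally, substituting into the long exact sequence
\[ \cdots \to H^{j-1}(R) \to H^{j-k+1}(Y) \to H^j(Z) \to H^j(R) \to H^{j-k+2}(Y) \to \cdots \]
and specializing to $k=2$, $j=0$, together with the vanishing $H^{-1}(Y) = 0$ and $H^1(Z) = 0$, yields the short exact sequence
\[ 0 \to \ZZ \to \ZZ^{(i_1+1)(i_2+1)} \to H^0(Y) \to 0, \]
from which the rank computation follows at once. The other segments of the long exact sequence are compatible with the vanishing of $H^j(Y)$ for $j \neq 0$ established in \cref{prop:barcomplexKoszul}, so no further check is needed.

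There is essentially no obstacle here; all the substantive work has been carried out in the preceding results. The only point of mild care is keeping track of the syzygy-degree shift by $k-1$ in the embedding of bar complexes, which is precisely what aligns the generator of $H^0(Z)$ (the class of the full graph $Z_{(k;i_1,\dots,i_k)}$) with the subtracted $-1$ in the formula.
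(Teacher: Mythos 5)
Your proposal is correct and follows exactly the route the paper intends: the corollary is stated as an immediate consequence of \cref{prop:rank} via the long exact sequence already used in the proof of \cref{prop:barcomplexKoszul}, and your specialization to $k=2$, $j=0$ (using $H^{-1}(Y)=0$, $H^0(Z)=\ZZ$, $H^1(Z)=0$, and $\mathrm{rk}\,H^0(R)=(i_1+1)(i_2+1)$) yields precisely the short exact sequence $0 \to \ZZ \to \ZZ^{(i_1+1)(i_2+1)} \to H^0(Y) \to 0$ and hence the claimed rank. The only cosmetic imprecision is calling $\mathrm{Clov}$ ``a discrete set'' --- it merely has the homology of one --- but this does not affect the argument.
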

\noindent From \cref{prop:KoszulDiop,prop:barcomplexKoszul} it follows that $(\scrY^{(n)})^! \cong \Phi((\scrY^{(n)}_{pl})^!)$ is a Koszul dioperad; since $\scrY^{(n)}$ is finitely generated, we conclude that $\scrY^{(n)} \cong (\scrY^{(n)})^{!!}$ is also a Koszul dioperad, proving \cref{thm:main}.

\printbibliography

\Addresses

\end{document}